\newcounter{RomanNumber}
\newcommand{\MyRoman}[1]{\setcounter{RomanNumber}{#1}\Roman{RomanNumber}}
\newtheorem{theorem}{Theorem}
\newtheorem{lemma}[theorem]{Lemma}
\theoremstyle{definition}
\newtheorem{proposition}[theorem]{Proposition}
\theoremstyle{remark}
\newtheorem{remark}[theorem]{Remark}
\theoremstyle{notation}
\newtheorem{notation}[theorem]{Notation}
\newcommand{\be}{\begin{equation}}
\newcommand{\ee}{\end{equation}}
\newcommand{\Conf}{\ensuremath{{\rm Conf}}}
\newcommand{\qqed}{\hfill\Box}
\begin{document}

\keywords{}

\title
[Configuration of finite graphs]{Orders of the canonical vector bundles over configuration spaces of finite graphs}

\author{Frederick R. Cohen}
\address{Department of Mathematics, University of Rochester, Rochester, NY 14625, USA}
\email{cohf@math.rochester.edu}
\thanks{}

 \author{Ruizhi Huang}
\address{Institute of Mathematics and Systems Sciences, Chinese Academy of Sciences, Beijing 100190, China}
\email{huangrz@amss.ac.cn}
\urladdr{https://sites.google.com/site/hrzsea}
\thanks{}

\date{}

\maketitle

{\centering\footnotesize {\it To the memory of Professor Wen-Ts\"{u}n Wu (1919-2017)
.}\par}

\begin{abstract}
We prove that the order of the canonical vector bundle over the configuration space is $2$ for a general planar graph, and is $4$ for a nonplanar graph.
\end{abstract}

\section*{Introduction}
Let $\xi$ be a vector bundle. If there exists a positive integer $n$ such that the $n$-fold Whitney sum $\xi^{\oplus n}$ is trivial, then we say that $\xi$ has finite order. In this case, the smallest such $n$ is called the {\it order} of $\xi$, denoted by $o(\xi)$. Meanwhile, if we are only interested in stable bundles and stable equivalences, there is a parallel notion of {\it stable order} of $\xi$, denoted by $s(\xi)$. It is obvious that
\[
s(\xi)~|~o(\xi).
\]

Let $\Conf(X, n)$ denote the the space of configurations of $n$ distinct points lying in a topological space $X$, that is,
\[
\Conf(X, n)=\{(x_1,x_2,\ldots, x_n)\in X\times \cdots \times X~|~ x_i\neq x_j ~{\rm for}~ i\neq j\}.
\]
If $X$ has at least $n$ distinct points, then $\Conf(X, n)$ is non-empty.
The symmetric group $\Sigma_n$ on $n$-letters acts freely on $\Conf(X,n)$ from the left by
\[
\sigma (x_1,x_2,\ldots, x_n)=(x_{\sigma(1)},x_{\sigma(2)},\ldots,x_{\sigma(n)}),~~~ \ \ \sigma\in \Sigma_n,
\]
which induces the canonical covering
\begin{equation}\label{coverFeq}
\Sigma_n\rightarrow \Conf(X,n)\rightarrow \Conf(X,n)/\Sigma_n.
\end{equation}
Since $\Sigma_n$ acts canonically on the real Euclidean space $\mathbb{R}^n$ by permuting the coordinates from the right, there is the associated vector bundle
\begin{equation}\label{canbundleeq}
\xi_{X, n}: \mathbb{R}^n\rightarrow \Conf(X,n)\times_{\Sigma_n} \mathbb{R}^n\rightarrow \Conf(X,n)/\Sigma_n.
\end{equation}

It is an enduring interest to determine the order and stable order of $\xi_{X, n}$ for various $X$. The order and stable order for $X=\mathbb{R}^n$ have been extensively studied. Cohen-Mahowald-Milgram \cite{CMM78} showed that $o(\xi_{\mathbb{R}^2,n})=2$. For the higher dimensional Euclidean spaces, there are studies by Yang \cite{Yang81} and Cohen-Cohen-Kuhn-Neisendorfer \cite{CCKN83}. Beyond the Euclidean case, Cohen-Cohen-Mann-Milgram \cite{CCMM89} showed that the order for oriented surface of genus greater or equal to one is $4$. Ren \cite{Ren17} studied the order for real projective spaces and their Cartesian products with a Euclidean space, and further he \cite{Ren18} investigated the order and stable order for simply connected spheres and their disjoint unions.

In this paper, we study the order of $\xi_{X,n}$ when $X$ is a finite (connected or non-connected) graph.
The configuration space of particles on finite graph is interesting in both mathematics and physics. For instance, Abrams in his thesis \cite{Abrams00} studied discrete model of configuration space on graph. The topology of configuration space on graph was studied by Farley-Sabalka \cite{FS05}, Barnett-Farber \cite{BF09}, Farber-Hanbury \cite{FH10}, etc, while the physical aspect on quantum statistics on graphs was investigated by Harrsion-Keating-Robbins-Sawicki \cite{HKRS14}, and Maci\k{a}\.{z}ek \cite{Maciazek}. 
Our main theorem is as follows.
\begin{theorem}\label{main0727}
Let $\Gamma$ be a finite graph. Then for any $n\geq 2$
\begin{itemize}
\item[(1).] if $\Gamma$ is homeomorphic to a point, or a closed interval, or a disjoint union of finitely many of them; or if $\Gamma$ is homeomorphic to a circle with $n$ odd, then 
\[
s(\xi_{\Gamma,n})=o(\xi_{\Gamma,n})=1;
\] 
\item[(2).] if $\Gamma$ is planar but does belong to case (1)
\[
s(\xi_{\Gamma,n})=o(\xi_{\Gamma,n})=2;
\]
\item[(3).] if $\Gamma$ is nonplanar
\[
s(\xi_{\Gamma,2})=o(\xi_{\Gamma,2})=4.
\]
\end{itemize}
\end{theorem}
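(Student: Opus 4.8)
The plan is to reduce the computation of $o(\xi_{\Gamma,n})$ to an understanding of $H^*(\Conf(\Gamma,n)/\Sigma_n;\Z/2)$ and the first two Stiefel--Whitney classes of $\xi_{\Gamma,n}$, together with a KO-theoretic obstruction for the nonplanar case. The starting observation is that $\xi_{\Gamma,n}$ contains a trivial line (the diagonal $\R\subset\R^n$), so it splits as $\xi_{\Gamma,n}\cong \R\oplus\widetilde\xi_{\Gamma,n}$ where $\widetilde\xi_{\Gamma,n}$ is the reduced $(n-1)$-plane bundle associated to the standard representation of $\Sigma_n$; thus $o(\xi_{\Gamma,n})=o(\widetilde\xi_{\Gamma,n})$, and it suffices to decide when $\widetilde\xi^{\oplus k}$ is trivial. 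Because $\det\widetilde\xi_{\Gamma,n}$ is the sign-representation line bundle $\ell$, which is classified by the map $\Conf(\Gamma,n)/\Sigma_n\to B\Sigma_n\to B\Z/2=\R P^\infty$ detecting the sign of the covering \eqref{coverFeq}, the order is automatically divisible by $2$ as soon as $\ell$ is nontrivial, i.e. as soon as the covering \eqref{coverFeq} is nonorientable; and $w_1(\widetilde\xi^{\oplus k})=k\,w_1(\ell)$, so a necessary condition for triviality of $\widetilde\xi^{\oplus k}$ is that $k$ be even whenever $\ell\neq 0$.

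Next I would settle case (1) and the lower bounds in (2), (3) by analyzing when $\ell$ is trivial. For $\Gamma$ a point, an interval, or disjoint unions thereof, $\Conf(\Gamma,n)$ is either empty or a disjoint union of contractible (in fact convex, after ordering) pieces, so the quotient is homotopy equivalent to a discrete set or to $B\Sigma_m$-free pieces that are actually trivial coverings, and every bundle over it is trivial --- giving $o=s=1$. For $\Gamma$ a circle and $n$ odd, $\Conf(S^1,n)\simeq S^1\times\Sigma_n$ with the $\Sigma_n$-action permuting the second factor up to a cyclic rotation, and one checks the induced covering is orientable precisely when $n$ is odd, so again the associated bundle is stably and genuinely trivial. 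For planar $\Gamma$ not of this type, I need both that $\ell$ is nontrivial (so $o\geq 2$) and that $\widetilde\xi^{\oplus 2}$ is trivial; the nontriviality of $\ell$ comes from exhibiting an explicit loop in $\Conf(\Gamma,n)/\Sigma_n$ along which two particles are exchanged --- this exists whenever $\Gamma$ has an essential vertex or a cycle and $n\geq 2$ --- detected by a transposition in $\pi_1$.

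The positive direction --- showing $o(\xi_{\Gamma,n})\le 2$ in the planar case and constructing a stable trivialization --- is where I expect the main work. The key point is that a planar graph embeds in $\R^2$, hence $\Conf(\Gamma,n)\subset\Conf(\R^2,n)$ equivariantly, so $\xi_{\Gamma,n}$ is the pullback of $\xi_{\R^2,n}$ along $\Conf(\Gamma,n)/\Sigma_n\to\Conf(\R^2,n)/\Sigma_n$. Since Cohen--Mahowald--Milgram \cite{CMM78} give $o(\xi_{\R^2,n})=2$, naturality of Whitney sums immediately yields $\xi_{\Gamma,n}^{\oplus 2}\cong(\xi_{\R^2,n}^{\oplus 2})|_{\Gamma}$ is trivial, so $o(\xi_{\Gamma,n})\mid 2$, and combined with the lower bound $o\geq 2$ this pins it to $2$ and forces $s=2$ as well (since $s\mid o$ and $s\ne 1$ because $w_1\ne 0$). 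For the nonplanar case with $n=2$, the relevant configuration space $\Conf(\Gamma,2)/\Sigma_2$ is a finite $2$-complex, and I would compute its $\Z/2$-cohomology and the classes $w_1,w_2$ of $\widetilde\xi_{\Gamma,2}$; the obstruction to $\widetilde\xi^{\oplus 2}$ being trivial is then a secondary (KO-theoretic) class, and I would compare with the genus-$\geq 1$ surface computation of Cohen--Cohen--Mann--Milgram \cite{CCMM89}, using that a nonplanar graph contains a $K_5$ or $K_{3,3}$ subgraph whose configuration space carries the relevant nontrivial class; the hard part is showing $4$ is both necessary (the order is not $2$) and sufficient, which amounts to identifying the $2$-complex $\Conf(\Gamma,2)/\Sigma_2$ up to the relevant bordism/$K$-theory invariant and invoking triviality of four copies via a dimension/vanishing argument in $\widetilde{KO}^0$ of a $2$-complex.
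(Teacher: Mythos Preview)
Your treatment of cases (1) and (2) is essentially the paper's: triviality for intervals/points is immediate, the circle is handled by factoring through $B\Z/n$, and the planar upper bound is pulled back from $\Conf(\R^2,n)$ via \cite{CMM78}. (One minor slip: $\Conf(S^1,n)$ is $(n-1)!$ copies of $S^1$, not $S^1\times\Sigma_n$, though your conclusion about parity of $n$ is correct.) For the lower bound in (2), the paper, like you, finds a $Y$-subgraph (or a circle) and detects $w_1\neq 0$.

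In case (3) your approaches diverge from the paper. For the \emph{upper} bound you propose a dimension argument: $\Conf(\Gamma,2)/\Sigma_2$ has the homotopy type of a $2$-complex, $\xi^{\oplus 4}$ has rank $8$ and $w(\xi^{\oplus4})=(1+w_1)^4=1$ in degrees $\le 2$, hence is trivial. That works. The paper instead embeds $\Gamma$ into an orientable surface of genus $\ge 1$ and pulls back the result $o(\xi_{M,n})=4$ of \cite{CCMM89}; this is shorter and also gives the upper bound for all $n$, not just $n=2$.

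The genuine gap is in your \emph{lower} bound for (3). You speak of a ``secondary (KO-theoretic)'' obstruction to triviality of $\xi^{\oplus 2}$, but none is needed: since $\xi_{\Gamma,2}\cong\epsilon^1\oplus\ell$, one has $w(\xi^{\oplus2})=(1+w_1(\ell))^2=1+w_1(\ell)^2$, so the obstruction is the \emph{primary} class $w_1^2$. What you are missing is a concrete mechanism to show $w_1^2\neq 0$. The paper supplies it via Abrams' computation (Lemma~\ref{k533=surfacelemma}) that $A(K_5,2)\simeq\sharp_6 T^2$ and $A(K_{3,3},2)\simeq\sharp_4 T^2$; passing to the $\Sigma_2$-quotient gives the nonorientable closed surfaces $\sharp_7 P^2$ and $\sharp_5 P^2$, on which $w_1$ is the orientation character and $w_1^2\neq 0$ is the fundamental class mod $2$. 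For a general nonplanar $\Gamma$ one then maps in from a Kuratowski subgraph. Without this surface identification (or an equivalent cohomological calculation), your outline does not yet establish $s(\xi_{\Gamma,2})>2$.
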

\begin{proof}
It is well known that $\Conf(\mathbb{R}^1,n)$ is equivariantly homotopy equivalent to $\Sigma_n$, and the configuration space of a disjoint union is a disjoint union of products of configuration spaces of its components.  
Then the unordered configuration space $\Conf(\Gamma, n)/\Sigma_n$ in case (1) is homotopy equivalent to a disjoint union of points except the circle case. Hence the bundle $\xi_{\Gamma, n}$ is trivial and the order is $1$.
The order and stable order for $\Gamma\cong S^1$ are computed in Proposition \ref{s1orderpro}, while for the general planar graph they are determined in Proposition \ref{planarorderpro}. The orders for nonplanar graph are determined first for two kinds of special graphs homeomorphic to $K_5$ or $K_{3,3}$ in Proposition \ref{Kurnorderprop}, and then for the general cases in Proposition \ref{nonplanarorderprop}.
\end{proof}
\begin{notation}
Since the graphs in case 1 except ones homeomorphic to circle are trivial to our problem, we would like to exclude them in the computations. Hence, we let $\mathcal{L}$ be the set of graphs homeomorphic to a point, or a closed interval, or a disjoint union of finitely many of them, and usually suppose that $\Gamma \not\in \mathcal{L}$.
\end{notation}

The paper is organized as follows. In Section \ref{sec: model} we review the discrete model of Abrams for the configuration space of particles on graph. In Section \ref{sec: planar} and Section \ref{sec: nonplanar} we compute the orders and the stable orders of the canonical bundles for planar and nonplanar graphs respectively. Section \ref{sec: stable} is devoted to the application of Theorem \ref{main0727} on the stable homotopy types of generalized divided powers.

$\, $

\noindent{\em Acknowledgements.}
Ruizhi Huang was supported in part by National Natural Science Foundation of China (Grant no. 11801544), and ``Chen Jingrun'' Future Star Program of AMSS.

\numberwithin{equation}{section}
\numberwithin{theorem}{section}

\section{}\label{sec: model}
Let us first recall some useful results about configuration space of particles on a finite graph, based on the thesis of Abrams \cite{Abrams00} and of Maci\k{a}\.{z}ek \cite{Maciazek}, and the papers \cite{BF09, HKRS14}.

Let $\Gamma$ be a finite graph, or equivalently a finite $1$-dimensional $CW$-complex. For any point $x\in \Gamma$, as in Section $1$ of \cite{BF09} we define the {\it support} of $x$ by
\[
 {\rm supp}\{x\}=\left\{\begin{array}{cc}
    x & \ \  {\rm if}~ x~{\rm is}~{\rm a}~{\rm vertex},   \\
    e & \ \ \  \ ~{\rm if}~ x\in \mathring{e}, ~{\rm an}~{\rm edge}.    \\
  \end{array}\right.
\]
For each $n\geq 2$, the {\it Abrams discrete model} of $\Conf(\Gamma, n)$ is defined to be 
\[
A(\Gamma, n)=\{(x_1,\ldots, x_n)\in \Gamma\times\cdots \times \Gamma~|~ {\rm supp}\{x_i\} \cap {\rm supp}\{x_j\} = \emptyset, ~ {\rm for}~{\rm all}~i\neq j\}.
\]
It is obvious that
\[
A(\Gamma, n)\subseteq \Conf(\Gamma, n),
\]
and the canonical permutation on $\Gamma\times\cdots \times\Gamma$ induces a free action on $A(\Gamma, n)$. Abrams proved the following important theorem in his thesis.
\begin{theorem}[Theorem $2.1$ of \cite{Abrams00}]\label{retrlemma}
Let $\Gamma$ be a finite graph with at least $n$ vertices. If $\Gamma$ satisfies that 
\begin{itemize}
\item[1.] each path between distinct vertices of degree not equal to $2$ passes through at least $n-1$ edges,
\item[2.] and each nontrivial loop passes through at least $n+1$ edges,
\end{itemize}
then the $\Sigma_n$-equivariant inclusion $A(\Gamma, n)\hookrightarrow \Conf(\Gamma, n)$ is a homotopy equivalence. In particular,
\[
\hspace{3.80cm}
A(\Gamma, n)/\Sigma_n\simeq \Conf(\Gamma, n)/\Sigma_n.  \hspace{3.80cm}\Box
\]  
\end{theorem}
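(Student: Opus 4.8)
\medskip
\noindent\emph{Proof strategy.} The plan is to construct a $\Sigma_n$-equivariant strong deformation retraction of $\Conf(\Gamma,n)$ onto $A(\Gamma,n)$; the assertion about the quotients is then immediate. (Alternatively, since $\Sigma_n$ acts freely on both spaces, it would suffice to produce a non-equivariant homotopy equivalence $A(\Gamma,n)\hookrightarrow\Conf(\Gamma,n)$ and appeal to the equivariant Whitehead theorem, but the retraction I have in mind is equivariant on the nose because it never refers to the labels of the particles.) First I would fix a path metric on $\Gamma$ in which every closed edge is isometric to $[0,1]$; this is exactly where the two hypotheses are used, as they guarantee that $\Gamma$ has been subdivided into enough unit pieces. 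Then $A(\Gamma,n)$ is the cubical subcomplex of the product cube complex $\Gamma^{\,n}$ spanned by the products $\sigma_1\times\cdots\times\sigma_n$ of pairwise disjoint closed cells of $\Gamma$, and $\Conf(\Gamma,n)$ is an open $\Sigma_n$-invariant neighbourhood of it in $\Gamma^{\,n}$.

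Next I would analyse the complement $\Conf(\Gamma,n)\setminus A(\Gamma,n)$: a configuration fails to lie in $A(\Gamma,n)$ exactly when some two particles sit on closed cells that share a vertex --- two particles in one edge, two particles in adjacent edges, or a particle at an endpoint of an edge carrying another particle. Every such conflict is localised: the particles involved lie on a single maximal arc of degree-$2$ vertices (possibly together with the vertices of degree $\ne 2$ at its ends) or on a short loop. The retraction will resolve conflicts by \emph{sliding} the particles involved along $\Gamma$ into unoccupied cells, and hypotheses 1 and 2 are precisely what make this possible: arranging at most $n$ points with pairwise disjoint supports on an arc with $m$ edges requires $m\ge n-1$ (the cell--incidence graph of such an arc is a path on $2m+1$ nodes, whose largest independent set has size $m+1$), and the analogous analysis for a loop, accounting for the extra room needed to move a particle past the vertex of degree $\ne 2$ the loop runs through, yields the bound $m\ge n+1$. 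I would then build the homotopy as a finite concatenation of elementary moves, each one pushing the particles of a single conflict a bounded distance toward the nearest admissible position by a rule depending only on the metric and the combinatorics of $\Gamma$, arranged so that each move strictly decreases a fixed potential (say the number of conflicting pairs, refined by total overlap) and introduces no new conflict; particles not in any conflict are held still, so well-separated conflicts are handled independently.

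The step I expect to be the main obstacle is not the existence of the individual moves but the \emph{continuity and mutual compatibility of the assembled homotopy}: as a configuration varies, conflicts appear and disappear (a particle crosses a vertex, two particles on an edge drift together), so the type decomposition of a conflict is not locally constant, and one must check that the competing elementary moves agree on overlaps and limit correctly at the degenerate positions where a particle lies exactly on a vertex. The cleanest way I see to control this is to work locally and then splice: on each maximal arc of degree-$2$ vertices, together with its two endpoints, write down an explicit label-free deformation retraction of the (at most $n$-point) configuration space of that arc onto its own discrete model --- a genuinely one-dimensional problem, soluble by an order-preserving straightening flow --- make these retractions mutually compatible over the shared vertices of degree $\ne 2$ using the slack provided by hypothesis 1, treat loop components and loops through a single vertex of degree $\ne 2$ by the same device via hypothesis 2, and splice. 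Since every local piece ignores the particle labels, the splice is automatically $\Sigma_n$-equivariant, which yields $A(\Gamma,n)\simeq\Conf(\Gamma,n)$ equivariantly and in particular the claimed equivalence of quotients. The one genuinely delicate point that forces the hypotheses --- and is the heart of Abrams' argument --- is the edge-counting that certifies there is always a free cell to slide into.
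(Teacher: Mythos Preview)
The paper does not supply a proof of this statement: it is quoted verbatim as Theorem~2.1 of Abrams' thesis and marked with a $\Box$ at the end of the display, so there is nothing in the paper to compare your proposal against. Your sketch---build a $\Sigma_n$-equivariant deformation retraction by sliding conflicting particles along maximal degree-$2$ arcs into free cells, using hypotheses~1 and~2 to guarantee room, and then check continuity where conflict types change---is essentially the strategy Abrams himself carries out in the cited thesis, so as a reconstruction of the original argument your outline is on target.
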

Following \cite{Abrams00, FS05, HKRS14} we call a graph with properties 1 and 2 {\it sufficiently subdivided}.
When $n=2$, $A(\Gamma, 2)$ coincides with the ``simplicial deleted product'' of Shapiro \cite{Shapiro57}. In this case, Theorem \ref{retrlemma} can be strengthened.
\begin{lemma}[Theorem $2.4$ of \cite{Abrams00}]\label{eqretrlemma}
Let $\Gamma$ be a simple graph, i.e., a finite simplicial complex of dimension one. 
$A(\Gamma, 2)$ is a $\mathbb{Z}/2$-equivariant strong deformation retraction of $\Conf(\Gamma, 2)$. ~$\qqed$ 
\end{lemma}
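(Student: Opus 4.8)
The plan is to construct the retraction by hand, cell by cell, in the product CW structure on $\Gamma\times\Gamma$. First I would note that, as remarked above, $A(\Gamma,2)$ is the simplicial deleted product $\bigcup_{\sigma\cap\tau=\emptyset}\sigma\times\tau$, the union taken over pairs of closed simplices of $\Gamma$; it is a genuine subcomplex of $\Gamma\times\Gamma$ since disjointness of $\sigma,\tau$ is inherited by their faces, and it is $\mathbb{Z}/2$-invariant since the swap permutes the product cells $\sigma\times\tau\leftrightarrow\tau\times\sigma$. Write $N$ for the open complement, the union of the open cells $\mathring\sigma\times\mathring\tau$ with $\sigma\cap\tau\neq\emptyset$. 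No point of the diagonal $\Delta$ lies in $A(\Gamma,2)$, so $\Delta\subseteq N$, and $\Conf(\Gamma,2)=\Gamma\times\Gamma\setminus\Delta=(\overline N\setminus\Delta)\cup A(\Gamma,2)$ with $(\overline N\setminus\Delta)\cap A(\Gamma,2)=\overline N\cap A(\Gamma,2)$. Hence it suffices to produce a $\mathbb{Z}/2$-equivariant deformation retraction of $\overline N\setminus\Delta$ onto the frontier $\overline N\cap A(\Gamma,2)$ which is stationary on that frontier; gluing it with the identity on $A(\Gamma,2)$ then yields the desired strong deformation retraction, continuity following from the pasting lemma since $A(\Gamma,2)$ is a closed subcomplex.

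Next I would enumerate the closed cells $\overline\sigma\times\overline\tau$ with $\sigma\cap\tau\neq\emptyset$ that cover $\overline N$. Because $\Gamma$ is a simple graph (a $1$-dimensional simplicial complex), distinct edges meet in at most one vertex and there are no loops, so up to the swap the only possibilities are: (i) $v\times v$, which lies in $\Delta$; (ii) $v\times e$ with $v$ an endpoint of $e$; (iii) $e\times e$; and (iv) $e\times e'$ with $e\cap e'$ a single vertex $v$. On each I would use a straight-line homotopy toward a ``good corner'' of $A(\Gamma,2)$. For (ii), parametrizing $e=[v,w]$ so that $v$ is at $0$, the cell $v\times e$ runs from the removed point $(v,v)$ to the frontier point $(v,w)$; push linearly to $(v,w)$. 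For (iv), parametrizing $e$ and $e'$ by $[0,1]$ with $v$ at $0$, the only diagonal point is the corner $(0,0)$, $A(\Gamma,2)$ meets the square in the ``L'' $\{x=1\}\cup\{y=1\}$, and I would use the straight-line homotopy to $r(x,y)=(x,y)/\max(x,y)$, which retracts $[0,1]^2\setminus\{(0,0)\}$ onto that L. For (iii), $\Delta$ splits $\overline e\times\overline e$ into two triangles, each containing exactly one point of $A(\Gamma,2)$, namely $(v,w)$ resp.\ $(w,v)$; retract each triangle onto its good corner by the straight-line homotopy, which one checks meets $\Delta$ only at its initial point. All these formulas are symmetric under the swap, so equivariance is automatic, and each is stationary on $\overline N\cap A(\Gamma,2)$ (for (iv), the straight-line homotopy to $r$ fixes every point of $\{x=1\}\cup\{y=1\}$ throughout).

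The key point is that these cellwise homotopies agree on shared faces, so they assemble to a continuous homotopy. Every face of a cell of type (iii) or (iv) is either of type (ii) or lies in the frontier; on a type-(ii) face the straight-line rule of an adjacent triangle in (iii) and the radial rule of an adjacent square in (iv) restrict to the very same linear path to the same endpoint $(v,w)$, and on a frontier face the radial rule is the identity (any number of squares sharing it agree trivially). I expect the main obstacle to be exactly this bookkeeping: checking compatibility of the homotopies, not merely their endpoints, across all faces shared by cells of types (ii)--(iv), and confirming that the cell-type list is exhaustive, which reduces to the fact that edges of a simple graph meet in at most a vertex. This is also where the hypotheses $n=2$ and $\dim\Gamma=1$ are genuinely used: the diagonal cuts each square $\overline e\times\overline e$ into two triangles with one good corner apiece, and this clean local picture is what fails for $n\geq 3$ or higher-dimensional complexes, forcing the subdivision conditions of Theorem \ref{retrlemma}.
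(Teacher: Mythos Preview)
The paper does not give its own proof of this lemma: the statement is closed with a $\Box$ and attributed to Theorem~2.4 of Abrams's thesis, with a historical remark that the earlier proofs by Shapiro and Wu contained a gap which Abrams repaired. So there is no in-paper argument to compare against.

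Your proposal is correct and is exactly the kind of hands-on, cell-by-cell construction that underlies Abrams's proof. The classification of cells $\overline\sigma\times\overline\tau$ with $\sigma\cap\tau\neq\emptyset$ into the four types (i)--(iv) is complete precisely because $\Gamma$ is simple (no loops, no multiple edges), and the explicit homotopies you write down---linear push to the far vertex in type~(ii), straight-line to the off-diagonal corner in each half of type~(iii), and the radial map $r(x,y)=(x,y)/\max(x,y)$ in type~(iv)---are $\mathbb{Z}/2$-equivariant, stationary on the frontier, and do agree on shared faces. The compatibility checks you flag as the main obstacle are genuine but routine: on a type~(ii) face $v\times e$, both the adjacent type~(iii) triangle and any adjacent type~(iv) square restrict to the same linear path $(v,s)\mapsto(v,s+t(1-s))$ toward $(v,w)$; on a frontier edge such as $u\times e\subset A(\Gamma,2)$, every adjacent type~(iv) square fixes it pointwise for all $t$. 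Your closing remark that this clean local picture is exactly what breaks for $n\geq 3$ (forcing the subdivision hypotheses of Theorem~\ref{retrlemma}) is also on point.
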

This result was originally proved by Shapiro \cite{Shapiro57} and W-T Wu \cite{Wu59, Wu65}. However, as pointed out by Barnett-Farber \cite{BF09} the proof of Lemma $2.1$ of \cite{Shapiro57} is incorrect. Nevertheless, Abrams gave a clear proof in his thesis \cite{Abrams00}.

\section{}\label{sec: planar}
Recall that a graph $\Gamma$ is {\it planar} if and only if it can be embedded into the real plane $\mathbb{R}^2$; otherwise $\Gamma$ is {\it nonplanar}. Moreover, if $\Gamma$ is nonplanar, it is easy to see that it can be embedded into orientable surface of higher genus (for instance, see page 53 of \cite{White01}). Let us consider planar graphs in this section.  We start with a general observation which has been used for example in \cite{CCMM89, Ren18} for surfaces and spheres. 
\begin{lemma}\label{genoblemma}
Let $n$ and $m$ be two positive integers such that $n\leq m$. For two complexes $X$ and $Y$, suppose there exists a $\Sigma_n$-equivariant map
\[
\Conf(X,n)\stackrel{}{\longrightarrow} \Conf(Y, m),
\]
where $\Sigma_n$ acts on $\Conf(Y, m)$ through a group monomorphism $\Sigma_n \subseteq \Sigma_m$. Then
\[
s(\xi_{X, n})\leq s(\xi_{Y, m}).
\]
Moreover, if further $m=n$ then
\[
o(\xi_{X, n})\leq o(\xi_{Y, m}).
\]
\end{lemma}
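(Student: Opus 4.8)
The plan is to exploit the naturality of the whole construction~\eqref{canbundleeq} under $\Sigma_n$-equivariant maps. The key point is that a vector bundle $\xi_{X,n}$ is trivial, or stably trivial, after a Whitney sum precisely when a corresponding statement holds about the classifying map $\Conf(X,n)/\Sigma_n \to B\Sigma_n$ composed with the real permutation representation $\rho_n : \Sigma_n \to O(n)$. More concretely, $\xi_{X,n}$ is the pullback of the universal bundle $E\Sigma_n \times_{\Sigma_n} \mathbb{R}^n \to B\Sigma_n$ along the map $q_{X,n} \colon \Conf(X,n)/\Sigma_n \to B\Sigma_n$ classifying the covering~\eqref{coverFeq}; the order $o(\xi_{X,n})$ is the least $k$ with $k\,q_{X,n}^{*}(\rho_n)$ stably-plus-dimension-count trivial, and $s(\xi_{X,n})$ the least $k$ with $k\,q_{X,n}^{*}(\rho_n)$ stably trivial in $\widetilde{KO}^0(\Conf(X,n)/\Sigma_n)$.

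First I would set up this classifying-space description carefully: given the hypothesised $\Sigma_n$-equivariant map $f \colon \Conf(X,n) \to \Conf(Y,m)$, where $\Sigma_n \hookrightarrow \Sigma_m$, passing to quotients yields a map $\bar f \colon \Conf(X,n)/\Sigma_n \to \Conf(Y,m)/\Sigma_m$, and on classifying spaces a commuting square relating $q_{X,n}$, $q_{Y,m}$, and the map $B\Sigma_n \to B\Sigma_m$ induced by the inclusion. The crucial compatibility is that the restriction of the permutation representation $\rho_m$ of $\Sigma_m$ along $\Sigma_n \hookrightarrow \Sigma_m$ decomposes as $\rho_n \oplus (\text{rest})$; when $m=n$ and the monomorphism is the identity, this restriction is exactly $\rho_n$. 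Hence $\bar f^{*}(\xi_{Y,m})$ contains $\xi_{X,n}$ as a direct summand: $\bar f^{*}\xi_{Y,m} \cong \xi_{X,n} \oplus \eta$ for some complementary bundle $\eta$ over $\Conf(X,n)/\Sigma_n$, and when $m=n$ one has $\bar f^{*}\xi_{Y,n} \cong \xi_{X,n}$ on the nose.

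Then the inequalities follow formally. For the stable statement: if $s(\xi_{Y,m}) = k$, then $\xi_{Y,m}^{\oplus k}$ is stably trivial, so $\bar f^{*}(\xi_{Y,m}^{\oplus k}) = (\xi_{X,n} \oplus \eta)^{\oplus k}$ is stably trivial over $\Conf(X,n)/\Sigma_n$. In reduced $\widetilde{KO}^0$ this reads $k[\xi_{X,n}] + k[\eta] = 0$. This does not immediately give $k[\xi_{X,n}]=0$ unless one knows $[\eta]$ is already nice — so here I would instead argue directly: since $\rho_m|_{\Sigma_n} = \rho_n \oplus \tau$ where $\tau$ is the \emph{permutation} representation of $\Sigma_n$ on the remaining $m-n$ coordinates, $\tau$ is itself a restriction of a trivial-plus-reduced structure, and more to the point the complement $\eta$ is pulled back from $B\Sigma_n$ via $q_{X,n}$ and a representation $\tau$ that is stably trivial in a range... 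Actually the clean route is: $\rho_m|_{\Sigma_n}$ decomposes, and one checks the associated bundle $E\Sigma_n\times_{\Sigma_n}\mathbb{R}^m$ restricted appropriately is $\xi_{X,n}\oplus(\text{trivial})$ when the action on the extra coordinates is \emph{trivial}; since $\Sigma_n\subseteq\Sigma_m$ need not fix the extra coordinates, $\tau$ is a genuine permutation bundle and we instead observe that $\xi_{Y,m}^{\oplus k}$ stably trivial pulls back to $(\bar f^{*}\xi_{Y,m})^{\oplus k}$ stably trivial, and the summand projection $\widetilde{KO}^0(\Conf(X,n)/\Sigma_n)$ splitting off $[\xi_{X,n}]$-component gives $k[\xi_{X,n}]=0$ — this splitting exists because the decomposition $\bar f^{*}\xi_{Y,m}\cong\xi_{X,n}\oplus\eta$ is a genuine Whitney-sum decomposition of bundles, so $k[\xi_{X,n}]$ is a summand of $k[\bar f^{*}\xi_{Y,m}]=0$ in the sense that it equals $-k[\eta]$; to conclude I would note that $\eta$ is classified through $B\Sigma_{m-n}$ by the permutation representation of $\Sigma_{m-n}$, but restricted along $\Sigma_n\to\Sigma_{m-n}$ — hence this is genuinely extra input. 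Given the scope of a plan, I will record that the actual argument in the literature (\cite{CCMM89, Ren18}) handles this by taking $\eta$ to be \emph{trivial}, i.e.\ arranging the monomorphism so $\Sigma_n$ acts trivially on the extra $m-n$ coordinates — then $\bar f^{*}\xi_{Y,m}\cong\xi_{X,n}\oplus\underline{\mathbb{R}^{m-n}}$, and $s(\xi_{X,n}^{\oplus k})=s((\bar f^{*}\xi_{Y,m})^{\oplus k})$ follows at once from stable triviality being pullback-stable, giving $s(\xi_{X,n})\le k=s(\xi_{Y,m})$. For the order statement with $m=n$ there is no extra coordinate: $\bar f^{*}\xi_{Y,n}\cong\xi_{X,n}$, so if $\xi_{Y,n}^{\oplus k}$ is trivial then so is its pullback $\xi_{X,n}^{\oplus k}$, whence $o(\xi_{X,n})\le k=o(\xi_{Y,n})$.

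The main obstacle, and the only non-formal point, is justifying the representation-theoretic decomposition and in particular being precise about what ``$\Sigma_n$ acts on $\Conf(Y,m)$ through $\Sigma_n\subseteq\Sigma_m$'' buys us: one must verify that the classifying map of the restricted-to-$\Sigma_n$ covering of $\Conf(Y,m)$ pulls $\rho_m$ back to a representation whose bundle splits off a copy of $q_{X,n}^{*}\rho_n = \xi_{X,n}$. Once the convention on the monomorphism is fixed so that the complementary $\Sigma_n$-action is trivial (the standard block inclusion $\Sigma_n\times\Sigma_{m-n}\hookrightarrow\Sigma_m$ followed by projection, which is exactly how such maps arise in \cite{CCMM89}), everything reduces to the elementary facts that pullbacks of (stably) trivial bundles are (stably) trivial and that Whitney sum commutes with pullback. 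I would then close by remarking that in all later applications in Sections~\ref{sec: planar} and~\ref{sec: nonplanar} the equivariant map $f$ is constructed precisely with this block form, so the hypotheses of the lemma are met in the intended form.
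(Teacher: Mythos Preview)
Your eventual argument is exactly the paper's: the induced map $\tilde f$ on quotients satisfies $\tilde f^{*}\xi_{Y,m}\cong\xi_{X,n}\oplus\epsilon^{m-n}$, and then (stable) triviality of $\xi_{Y,m}^{\oplus k}$ pulls back to (stable) triviality of $\xi_{X,n}^{\oplus k}$, giving the two inequalities. The paper's proof is four sentences and simply asserts the pullback identity $\tilde f^{*}\xi_{Y,m}=\xi_{X,n}\oplus\epsilon^{m-n}$ without comment.

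Your extended worry about a \emph{general} monomorphism $\Sigma_n\hookrightarrow\Sigma_m$ and a possibly nontrivial complement $\eta$ is not misplaced---the identity $\tilde f^{*}\xi_{Y,m}\cong\xi_{X,n}\oplus\epsilon^{m-n}$ genuinely requires that $\Sigma_n$ fix the extra $m-n$ coordinates, i.e.\ the standard block inclusion---but the paper tacitly assumes this, and as you correctly observe, every application in Sections~\ref{sec: planar} and~\ref{sec: nonplanar} uses precisely that block form. Your detours through $\widetilde{KO}^0$-splittings and restricted permutation representations are therefore unnecessary for what is actually being proved, though they do surface a real imprecision in the lemma's stated hypotheses. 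The clean version of your writeup would simply say: under the standard inclusion, $\rho_m|_{\Sigma_n}=\rho_n\oplus\mathbf{1}^{m-n}$, hence $\tilde f^{*}\xi_{Y,m}\cong\xi_{X,n}\oplus\epsilon^{m-n}$, and conclude in one line.
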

\begin{proof}
Denote by $f$ the equivariant map in the assumption. $f$ induces a map 
\[
\tilde{f}: \Conf(X,n)/\Sigma_n\rightarrow \Conf(Y, m)/\Sigma_m.
\]
Let $\epsilon^{i}$ be the trivial bundle of rank $i$.
Then $\xi_{Y, m}$ is pulled back to $\xi_{X,n}\oplus \epsilon^{m-n}$ along $\tilde{f}$. By definition $\xi_{Y, m}^{\oplus s(\xi_{Y, m})}$ is stably trivial, which implies that $\xi_{X, n}^{\oplus s(\xi_{Y, m})}$ is stably trivial. Hence $s(\xi_{X, n})\leq s(\xi_{Y, m})$. When $m=n$, $o(\xi_{X, n})\leq o(\xi_{Y, m})$ by the similar argument and the lemma is proved.
\end{proof}

\begin{lemma}\label{planelemma}
Let $\Gamma$ be a planar finite graph. Then
\[
s(\xi_{\Gamma, n})=o(\xi_{\Gamma, n})=1, ~{\rm or}~2.
\]
\end{lemma}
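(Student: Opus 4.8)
The plan is to produce, for any planar finite graph $\Gamma \not\in \mathcal{L}$, a $\Sigma_n$-equivariant map $\Conf(\Gamma, n) \to \Conf(\mathbb{R}^2, n)$ and then invoke Lemma \ref{genoblemma} together with the classical result $o(\xi_{\mathbb{R}^2, n}) = 2$ of Cohen--Mahowald--Milgram \cite{CMM78}. Since $\Gamma$ is planar it admits an embedding $\iota \colon \Gamma \hookrightarrow \mathbb{R}^2$. This embedding is injective, so it carries $n$-tuples of distinct points to $n$-tuples of distinct points, giving a map $\Conf(\Gamma, n) \to \Conf(\mathbb{R}^2, n)$, $(x_1, \dots, x_n) \mapsto (\iota(x_1), \dots, \iota(x_n))$; this map is clearly $\Sigma_n$-equivariant since both actions permute coordinates. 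Applying Lemma \ref{genoblemma} with $X = \Gamma$, $Y = \mathbb{R}^2$, $m = n$ yields $o(\xi_{\Gamma, n}) \leq o(\xi_{\mathbb{R}^2, n}) = 2$ and likewise $s(\xi_{\Gamma, n}) \leq s(\xi_{\mathbb{R}^2, n}) \leq 2$. Combined with the trivial lower bound $s(\xi_{\Gamma, n}) \mid o(\xi_{\Gamma, n})$ and $o(\xi_{\Gamma, n}) \geq 1$, this gives the two-valued conclusion.

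The one subtlety is the case $o(\xi_{\Gamma, n}) = 1$, which the statement allows: a priori some planar graph could have trivial bundle without being in $\mathcal{L}$. The upper bound argument above does not need to resolve this — the statement only asserts the value is $1$ or $2$ — so I would simply note that the dichotomy follows, and defer the precise determination of which graphs give $1$ versus $2$ to the later Proposition \ref{planarorderpro}. In particular no analysis of the fundamental group of $\Conf(\Gamma, n)/\Sigma_n$ or of Stiefel--Whitney classes is needed at this stage.

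The main (indeed only) obstacle is essentially bookkeeping rather than mathematics: one must be slightly careful that planarity is used as an honest topological embedding $\Gamma \hookrightarrow \mathbb{R}^2$ of the underlying space (a finite $1$-complex), not merely a combinatorial planarity of some graph structure — but these coincide for finite graphs, and the excerpt has already recalled this equivalence at the start of Section \ref{sec: planar}. A secondary point is that Lemma \ref{genoblemma} is stated for "complexes," and $\Gamma$ and $\mathbb{R}^2$ both qualify (one may replace $\mathbb{R}^2$ by a large disk or use that $\Conf(\mathbb{R}^2,n)$ has the homotopy type of a CW complex), so the hypotheses are met. Hence the proof is short: embed, push forward, apply Lemma \ref{genoblemma} and \cite{CMM78}.
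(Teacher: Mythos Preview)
Your proposal is correct and matches the paper's proof essentially line for line: embed $\Gamma$ in $\mathbb{R}^2$, pass to configuration spaces to get a $\Sigma_n$-equivariant map, then apply Lemma~\ref{genoblemma} together with $o(\xi_{\mathbb{R}^2,n})=2$ from \cite{CMM78}. The paper's version is simply terser and omits your parenthetical remarks about the $o=1$ case and the CW hypotheses.
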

\begin{proof}
Let $\Gamma \hookrightarrow \mathbb{R}^2$ be an embedding. It induces a $\Sigma_n$-equivariant embedding
\[
\Conf(\Gamma, n)\hookrightarrow \Conf(\mathbb{R}^2, n).
\]
It was proved in Theorem $1.2$ of \cite{CMM78} that $o(\xi_{\mathbb{R}^2, n})=2$. Hence by Lemma \ref{genoblemma}
$o(\xi_{\Gamma, n})$ can only be $1$ or $2$, and the lemma follows.
\end{proof}

The circle is special among all graphs and we may treat it first.

\begin{proposition}\label{s1orderpro}
Let $\Gamma$ be a finite graph homeomorphic to $S^1$. Then
\[
s(\xi_{\Gamma, n})=o(\xi_{\Gamma, n})=\left\{\begin{array}{ll}
1  & ~ n~{\rm is}~{\rm odd}\\
2 &  ~n~{\rm is}~{\rm even.}
\end{array}
\right.
\]
\end{proposition}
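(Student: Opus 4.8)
The plan is to compute the configuration space $\Conf(S^1, n)/\Sigma_n$ explicitly and read off the canonical bundle directly. First I would note that $\Conf(S^1, n)$ deformation retracts onto the subspace where the $n$ points are ``evenly spread'': parametrizing $S^1 = \R/\Z$, the ordered configuration space is homotopy equivalent to the set of $n$ points in cyclic order, which is well known to be $\Sigma_n$-equivariantly homotopy equivalent to $S^1 \times \Sigma_n/C_n$, where $C_n \subseteq \Sigma_n$ is the cyclic group generated by the $n$-cycle $\tau = (1\,2\,\cdots\,n)$. Concretely, fixing the cyclic order and letting the ``spacings'' be equal, the space of such configurations is a single circle (the overall rotation) times a discrete set indexed by the coset space $\Sigma_n/C_n$, on which $\Sigma_n$ acts by left translation on the cosets and by rotation on the circle factor in a way twisted by $C_n$. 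Passing to the quotient, $\Conf(S^1,n)/\Sigma_n \simeq S^1$ — a single circle — and the covering $\eqref{coverFeq}$ restricts on each circle-fibre direction to the connected $n$-fold cover $S^1 \xrightarrow{z\mapsto z^n} S^1$ with deck group $C_n$, while the $\Sigma_n/C_n$ directions are just split off.

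Next I would identify the canonical bundle $\xi_{S^1, n}$. Since $\R^n$ as a $\Sigma_n$-representation splits as $\R$ (the trivial summand, spanned by $(1,\dots,1)$) plus the standard $(n-1)$-dimensional representation $V_{n-1}$, we get $\xi_{S^1,n} \cong \epsilon^1 \oplus \eta$, where $\eta$ is the bundle associated to $V_{n-1}$ via the $C_n$-cover $S^1 \to S^1$. So the order of $\xi_{S^1,n}$ equals the order of $\eta$, the bundle over $S^1$ obtained from the $C_n$-action on $V_{n-1}$. Now $C_n$ acts on $V_{n-1}$ as the sum of the nontrivial characters of $C_n$; over $\R$ these pair up into rotation planes, with one extra sign line exactly when $n$ is even (coming from the character of order $2$). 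Bundles over $S^1$ are classified by $\pi_0 O(k) = \Z/2$, i.e. by orientability, so $\eta$ is trivial iff it is orientable. The rotation-plane summands are always orientable (in fact trivial, being complex line bundles over $S^1$), so $\eta$ is trivial iff the sign-line summand is absent, i.e. iff $n$ is odd; when $n$ is even, $\eta \cong \eta_0 \oplus (\text{Möbius line})$ with $\eta_0$ trivial, so $\eta$ itself is nontrivial but $\eta \oplus \eta$ is trivial (the Whitney square of the Möbius band is trivial over $S^1$). Hence $o(\xi_{S^1,n}) = 1$ for $n$ odd and $\le 2$ for $n$ even; the lower bound $o \ge 2$ (equivalently $s \ge 2$) for $n$ even follows because the first Stiefel–Whitney class $w_1(\eta) \in H^1(S^1;\Z/2)$ is the nonzero class, which is a stable invariant. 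Since $s(\xi) \mid o(\xi)$ and both are squeezed between the same bounds, $s = o = 2$ for $n$ even.

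The main obstacle I anticipate is making the equivariant homotopy equivalence $\Conf(S^1,n) \simeq_{\Sigma_n} S^1 \times_{C_n} \Sigma_n$ (with the correct twisting) genuinely precise, and then tracking how the $\Sigma_n$-action — in particular the generator $\tau$ of $C_n$ — acts on the circle factor so as to correctly identify the monodromy of $\eta$. One clean way to do this is via Abrams' sufficiently subdivided model from Section \ref{sec: model}: take $\Gamma$ to be the cyclic graph with $\ge n+1$ edges, so $A(\Gamma,n) \simeq_{\Sigma_n} \Conf(S^1,n)$ by Theorem \ref{retrlemma}, and then $A(\Gamma,n)$ is visibly a disjoint union of circles permuted by $\Sigma_n$ with stabilizer $C_n$, which makes both the homotopy type of the quotient and the monodromy computation transparent. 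Alternatively, one can avoid the explicit action entirely: use Lemma \ref{genoblemma} together with the planar embedding (giving $o \le 2$ from Lemma \ref{planelemma}), and then for the even case produce a map from $\Conf(S^1,n)/\Sigma_n$ detecting $w_1 \ne 0$ — e.g. restrict to two antipodal points and a rotation loop — to force $o \ge 2$; for the odd case, exhibit an explicit trivialization of $\xi_{S^1,n}$, for instance by using that $n$ odd lets one choose a continuous ``barycenter'' section to split off all of $\R^n$. I would present the Abrams-model argument as the primary route since it handles orientability and the $n$ odd/even dichotomy uniformly, with the Stiefel–Whitney class giving the matching lower bound.
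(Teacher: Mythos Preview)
Your argument is correct and structurally close to the paper's: both identify $\Conf(S^1,n)/\Sigma_n\simeq S^1$ with monodromy group the cyclic subgroup $C_n\subset\Sigma_n$, and both decide the question via the first Stiefel--Whitney class. The differences are in the implementation. The paper packages the monodromy as a factorization of the classifying map
\[
S^1\longrightarrow B\mathbb{Z}/n\stackrel{Bi}{\longrightarrow}B\Sigma_n\stackrel{B\rho}{\longrightarrow}BO(n)
\]
and reads off $\omega_1$ from the sign character ${\rm sgn}\circ i:\mathbb{Z}/n\to\mathbb{Z}/2$, which is surjective exactly when $n$ is even; you instead decompose the restricted $C_n$-representation $V_{n-1}$ into rotation planes plus (for $n$ even) a sign line, which is the same computation unwound. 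For the upper bound when $n$ is even, the paper invokes the planar embedding (Lemma~\ref{planelemma}) to get $o\le 2$, whereas you trivialize $\eta^{\oplus 2}$ directly by squaring the M\"obius line; your route is a little more self-contained, the paper's reuses machinery already in place. Your proposed use of Abrams' model to make the equivariant identification rigorous is fine but not needed here---the paper simply cites the equivariant homotopy type of $\Conf(S^1,n)$ as well known.
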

\begin{proof}
It is well known that $\Conf(S^1, n)\simeq \mathop{\coprod}\limits_{(n-1)!}S^1$, 
$\Conf(S^1, n)/\Sigma_n\simeq S^1$, and there is the map of coverings
\begin{gather*}
\begin{aligned}
\xymatrix{
\mathbb{Z}/n \ar[r] \ar[d]^{i} & S^1 \ar[r] \ar[d]^{j} & S^1 \ar@{=}[d]\\
\Sigma_n       \ar[r]  & \mathop{\coprod}\limits_{(n-1)!}S^1 \ar[r] & S^1, 
}
\end{aligned}
\label{redgps1diag}
\end{gather*}
where the first row is the canonical $n$-fold covering of $S^1$, the second row is homotopic to the covering of configuration space (\ref{coverFeq}) of $X=S^1$, $i$ is the injection of the subgroup consisting of cycles of length $n$, and $j$ is the inclusion of any component of $\mathop{\coprod}\limits_{(n-1)!}S^1$. In particular, the classifying map $f$ of $\xi_{S^1, n}$ can be factored as 
\begin{equation}\label{factorfeq}
f: S^1\stackrel{\tilde{f}}{\longrightarrow} B\mathbb{Z}/n\stackrel{Bi}{\longrightarrow} B\Sigma_n 
\stackrel{B\rho}{\longrightarrow} BO(n),
\end{equation}
where $\tilde{f}$ represents a generator of $\pi_1(B\mathbb{Z}/n)\cong \mathbb{Z}/n$, and $\rho$ is the canonical representation into the orthogonal group $O(n)$.
Equivalently, this means that the structure group of $\xi_{S^1, n}$ can be lifted to $\mathbb{Z}/n$.
In particular, $\omega_1(\xi_{S^1, n})=0$ when $n$ is odd, and $\xi_{S^1, n}$ is trivial. This proves the proposition when $n$ is odd.

On the other hand, there is the commutative diagram of group homomorphisms
\begin{gather*}
\begin{aligned}
\xymatrix{
\Sigma_n \ar[rr]^{\rho} \ar[dr]_{{\rm sgn}~~~} && O(n) \ar[dl]^{q~~~}\\
&\{-1, +1\}\cong \mathbb{Z}/2,
}
\end{aligned}
\label{snondiag}
\end{gather*}
where ${\rm sgn}$ is defined by the sign of permutation, and $q$ is the quotient of $O(n)$ by the subgroup $SO(n)$. Additionally, $(B{\rm sgn})^\ast: H^1(B\mathbb{Z}/2;\mathbb{Z}/2)\rightarrow H^1(B\Sigma_n;\mathbb{Z}/2)$ is an isomorphism, and $Bq$ represents the universal first Stiefel-Whitney class $\omega_1\in H^1(BO(n);\mathbb{Z}/2)$. It follows that $(B\rho)^\ast(\omega_1)\neq 0$, and further when $n$ is even $(Bi\circ B\rho)^\ast(\omega_1)\neq 0$ as ${\rm sgn}\circ i$ is surjective.
Hence, from (\ref{factorfeq}) we see that $\omega_1(\xi_{S^1, n})\neq 0$ when $n$ is even. Then $\xi_{S^1, 2}$ is stably nontrivial and $s(\xi_{\Gamma, n})=o(\xi_{\Gamma, n})=2$ by Lemma \ref{planelemma}. This proves the proposition when $n$ is even.
\end{proof}
\begin{remark}\label{mobiusremark}
When $n=2$, the total space of $\xi_{S^1, 2}$ is the M\"{o}bius strip, and the bundle is the projection to its equatorial circle.
\end{remark}

\begin{figure}[H]
\centering
\includegraphics[width=3.2in]{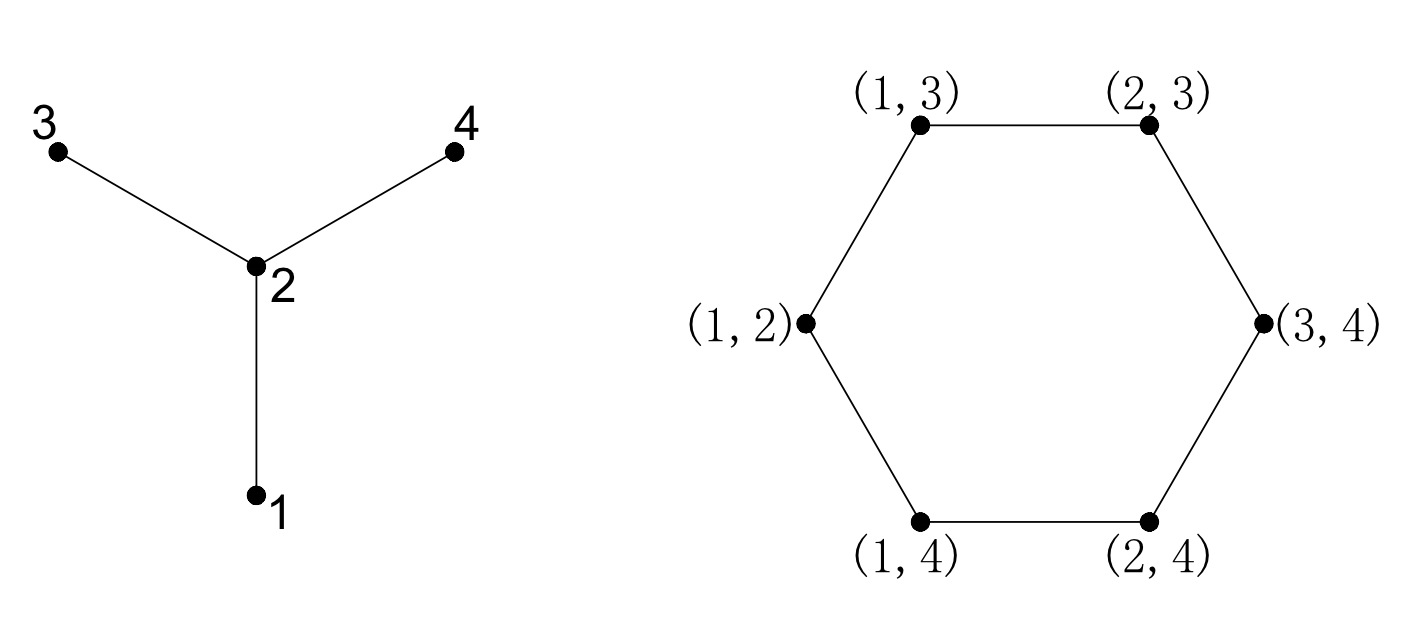}
\caption{$Y$-graph $K$ and $A(K,2)/\Sigma_2$}\label{ygraphfigure}
\end{figure}
Recall in Introduction we denote $\mathcal{L}$ to be the set of graphs homeomorphic to a point, or a closed interval, or a disjoint union of finitely many of them.
\begin{proposition}\label{planarorderpro}
Let $\Gamma$ be a planar finite graph such that $\Gamma \not\in \mathcal{L}$ and $\Gamma \not\cong S^1$. Then
\[
s(\xi_{\Gamma, n})=o(\xi_{\Gamma, n})=2,
\]
for any $n\geq 2$.
\end{proposition}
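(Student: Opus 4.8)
The plan is to show that $\xi_{\Gamma,n}$ is stably nontrivial; since Lemma \ref{planelemma} already forces $s(\xi_{\Gamma,n})=o(\xi_{\Gamma,n})\in\{1,2\}$ for planar $\Gamma$, this is enough. As in the proof of Proposition \ref{s1orderpro}, the bundle $\xi_{\Gamma,n}$ is classified by a map $\Conf(\Gamma,n)/\Sigma_n\to B\Sigma_n\stackrel{B\rho}{\longrightarrow}BO(n)$, and the diagram there relating $\rho$, the sign homomorphism, and the orientation quotient $q\colon O(n)\to\mathbb{Z}/2$ shows that $\omega_1(\xi_{\Gamma,n})$ is the pullback of the generator of $H^1(B\mathbb{Z}/2;\mathbb{Z}/2)$ along $\Conf(\Gamma,n)/\Sigma_n\to B\Sigma_n\stackrel{B{\rm sgn}}{\longrightarrow}B\mathbb{Z}/2$. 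Hence it suffices to make the monodromy $\pi_1(\Conf(\Gamma,n)/\Sigma_n)\to\Sigma_n\stackrel{{\rm sgn}}{\longrightarrow}\mathbb{Z}/2$ surjective. Rather than build such a loop by hand each time, I would route the argument through Lemma \ref{genoblemma}: it is enough to produce, for some $k$ with $2\le k\le n$, a $\Sigma_k$-equivariant map $\Conf(Z,k)\to\Conf(\Gamma,n)$ (with $\Sigma_k\subseteq\Sigma_n$ the standard subgroup on the first $k$ letters) from a graph $Z$ already known to satisfy $s(\xi_{Z,k})=2$; then $s(\xi_{\Gamma,n})\ge s(\xi_{Z,k})=2$, and Lemma \ref{planelemma} finishes it.

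Two building blocks will serve as $Z$. The first is the circle: $s(\xi_{S^1,k})=2$ for every even $k$, by Proposition \ref{s1orderpro}. The second is the $Y$-graph $K$ with $k=2$, and this is the step that carries the real content. Here the plan is to observe that $K$ is a simple graph, so Lemma \ref{eqretrlemma} gives a $\mathbb{Z}/2$-equivariant deformation retraction $\Conf(K,2)\simeq A(K,2)$; a direct inspection of the discrete model (Figure \ref{ygraphfigure}) identifies $A(K,2)$ with a circle on which $\Sigma_2$ acts freely by the rotation through $\pi$, so that $A(K,2)\to A(K,2)/\Sigma_2$ is the connected double cover of $S^1$. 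Consequently the sign-monodromy is surjective, $\omega_1(\xi_{K,2})\ne 0$, and Lemma \ref{planelemma} upgrades this to $o(\xi_{K,2})=2$. In concrete terms this is just the statement that two particles on a $Y$-graph can be interchanged.

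With the building blocks in hand I would finish by a reduction, splitting on whether $\Gamma$ has an essential vertex; throughout I may subdivide $\Gamma$ freely, as this changes neither its homeomorphism type nor $\Conf(\Gamma,n)$ nor $\xi_{\Gamma,n}$. \emph{Case A: some component of $\Gamma$ contains a vertex $v$ of degree $\ge 3$.} After sufficient subdivision, choose an embedded copy $K_0\cong K$ of the $Y$-graph inside a small neighborhood of $v$ together with $n-2$ distinct points $p_3,\dots,p_n$ of $\Gamma$ lying \emph{outside} $K_0$ (the edges at $v$ supply the room). Then $(q_1,q_2)\mapsto(q_1,q_2,p_3,\dots,p_n)$ defines a $\Sigma_2$-equivariant map $\Conf(K_0,2)\to\Conf(\Gamma,n)$ — the two moving particles never collide with the parked ones precisely because the latter avoid $K_0$ — so Lemma \ref{genoblemma} and the $Y$-graph computation give $s(\xi_{\Gamma,n})\ge s(\xi_{K_0,2})=2$. \emph{Case B: every vertex of $\Gamma$ has degree $\le 2$.} Then each component of $\Gamma$ is a point, a closed interval, or a circle; since $\Gamma\notin\mathcal{L}$ at least one component $C$ is a circle, and since $\Gamma\not\cong S^1$ the complement $R=\Gamma\setminus C$ is non-empty, hence contains at least one point. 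An elementary parity check (the admissible values of $k$ range over at least two consecutive integers, hence include an even one) yields an even $k$ with $1\le k\le n$ for which $n-k$ distinct points fit in $R$; fixing such points and appending them to configurations on $C$ gives a $\Sigma_k$-equivariant map $\Conf(S^1,k)=\Conf(C,k)\to\Conf(\Gamma,n)$, whence Lemma \ref{genoblemma} and Proposition \ref{s1orderpro} give $s(\xi_{\Gamma,n})\ge s(\xi_{S^1,k})=2$. In either case $s(\xi_{\Gamma,n})=o(\xi_{\Gamma,n})=2$ by Lemma \ref{planelemma}.

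The principal obstacle is the $Y$-graph input of the second paragraph: everything downstream of it is formal, so the crux is the geometric fact that $\Conf(K,2)\to\Conf(K,2)/\Sigma_2$ is connected, i.e.\ that two points on a $Y$-graph can be exchanged — exactly the feature separating a non-trivial planar graph from a closed interval. A secondary point needing care is the parity bookkeeping of Case B (producing the even $k$ for all $n\ge 2$, including when $R$ is a finite set of isolated vertices) and, in Case A, verifying that the $n-2$ parked points can always be placed off the chosen $Y$-subgraph after subdivision; both are routine but deserve to be written out.
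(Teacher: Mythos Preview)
Your proposal is correct and follows essentially the same approach as the paper: both reduce via Lemma~\ref{genoblemma} to the $Y$-graph input $s(\xi_{K,2})=2$ when some component has an essential vertex, and to the circle input from Proposition~\ref{s1orderpro} in the remaining disconnected all-degree-$\le 2$ case, then finish with Lemma~\ref{planelemma}. The only cosmetic differences are that the paper parks the extra $n-2$ points inside $K$ itself via a proper self-embedding $i\colon K\hookrightarrow K$ (factoring through $\Conf(K,n)$) rather than in $\Gamma\setminus K_0$, and cites Abrams' computation $\Conf(K,2)/\Sigma_2\simeq S^1$ together with Remark~\ref{mobiusremark} instead of redoing the discrete-model analysis of $A(K,2)$.
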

\begin{proof}
Let us firstly consider the case when $\Gamma$ is connected. By assumption $\Gamma$ contains a vertex of degree at least $3$, in other words, it contains a $Y$-subgraph $K$ (Figure \ref{ygraphfigure}). By Example 2.2 of \cite{Abrams00}, $\Conf(K,2)/\Sigma_2\simeq S^1$. Then as in Remark \ref{mobiusremark}, $\xi_{K,2}$ is the M\"{o}bius strip, and $s(\xi_{K, 2})=o(\xi_{K, 2})=2$ by Lemma \ref{planelemma}.

For general $n$, we may first choose any proper self-embedding $i: K\hookrightarrow K$ of $K$.
With this we can define a $\Sigma_2$-equivariant map
\begin{equation}\label{yembeq}
\tilde{i}: \Conf(K,2)\stackrel{}{\longrightarrow} \Conf(K,n)
\end{equation}
by sending $(x, y)$ to $(x,y, a_1, a_2,\ldots, a_{n-2})$, where the $(n-2)$ distinct points $a_1$, $a_2,\ldots, a_{n-2}\in K-i(K)$.
Then there is the composition of $\Sigma_2$-equivariant maps
\begin{equation}\label{confk2gneq}
k: \Conf(K,2)\stackrel{\tilde{i}}{\longrightarrow} \Conf(K,n)\stackrel{j}{\longrightarrow} \Conf(\Gamma, n),
\end{equation}
where $j$ is the canonical inclusion. 
By Lemma \ref{genoblemma} $s(\xi_{\Gamma, n})\geq s(\xi_{K, 2})=2$, 
and the proposition follows from Lemma \ref{planelemma} for the case when $\Gamma$ is connected.

When $\Gamma$ is not connected, there is a component of $\Gamma$ containing a subgraph $T$, which is either a $Y$-graph or homeomorphic to $S^1$ as $\Gamma \not\in\mathcal{L}$. If $T$ is a $Y$-graph, then the previous discussion implies the statement of the proposition. Now suppose $T$ is homeomorphic to $S^1$. Consider the canonical $\Sigma_{n}$-equivariant embedding $K(T,n)\hookrightarrow K(\Gamma, n)$ when $n$ is even, while consider the $\Sigma_{n-1}$-equivariant embedding $K(T,n-1)\hookrightarrow K(\Gamma, n)$ sending $(x_1,\ldots, x_{n-1})$ to $(x_1,\ldots, x_{n-1}, y)$ with $y$ lying in a component of $\Gamma$ different from that of $T$ when $n$ is odd. 
Then by Lemma \ref{s1orderpro} and Lemma \ref{planelemma} and the similar argument above, we see that in either case $s(\xi_{\Gamma, 2n})=o(\xi_{\Gamma, 2n})=2$. This completes the proof of the proposition.
\end{proof}
\begin{remark}\label{Ynonplaneremark}
From the proof of Proposition \ref{planarorderpro}, we also see that the stable order of $\xi_{\Gamma, n}$ can not be $1$ for any nonplanar finite graph $\Gamma$ since it contains a proper $Y$-subgraph.
\end{remark}

\section{}\label{sec: nonplanar}
In this section, we determine the order and stable order of $\xi_{\Gamma, n}$ for nonplanar graph $\Gamma$.
\begin{lemma}\label{nonplanarorderlemma}
Let $\Gamma$ be a nonplanar finite graph. Then
\[
o(\xi_{\Gamma, n})=2,~{\rm or}~4,
\]
for any $n\geq 2$.
\end{lemma}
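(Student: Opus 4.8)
The strategy is to sandwich $o(\xi_{\Gamma,n})$ between the lower bound $2$ and the upper bound $4$, exactly paralleling the planar case but now using the genus-one surface in place of $\mathbb{R}^2$. For the lower bound, I would invoke Remark \ref{Ynonplaneremark}: any nonplanar finite graph $\Gamma$ contains a proper $Y$-subgraph $K$, and the argument of Proposition \ref{planarorderpro} (via the $\Sigma_2$-equivariant map $\Conf(K,2)\to\Conf(\Gamma,n)$ of \eqref{confk2gneq} and Lemma \ref{genoblemma}) already shows $s(\xi_{\Gamma,n})\geq 2$, hence $o(\xi_{\Gamma,n})\geq s(\xi_{\Gamma,n})\geq 2$. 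So the orders $o(\xi_{\Gamma,n})$ can only be at least $2$; in particular they are never $1$.

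For the upper bound, I would use the fact recalled at the start of Section \ref{sec: nonplanar}: a nonplanar graph $\Gamma$ embeds into some closed orientable surface $\Sigma_g$ of genus $g\geq 1$ (page 53 of \cite{White01}). Such an embedding $\Gamma\hookrightarrow\Sigma_g$ induces a $\Sigma_n$-equivariant embedding $\Conf(\Gamma,n)\hookrightarrow\Conf(\Sigma_g,n)$. By the theorem of Cohen–Cohen–Mann–Milgram \cite{CCMM89} cited in the Introduction, the order of the canonical bundle over the configuration space of an orientable surface of genus $\geq 1$ is $4$, i.e. $o(\xi_{\Sigma_g,n})=4$. Applying Lemma \ref{genoblemma} with $m=n$ gives $o(\xi_{\Gamma,n})\leq o(\xi_{\Sigma_g,n})=4$.

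Combining the two bounds, $2\leq o(\xi_{\Gamma,n})\leq 4$. Since $o(\xi_{\Gamma,n})$ divides any $N$ with $\xi_{\Gamma,n}^{\oplus N}$ trivial and $\xi_{\Gamma,n}^{\oplus 4}$ is trivial, the only possible values in the range $\{2,3,4\}$ that are consistent with finiteness are in fact $\{2,4\}$ — but to be careful one should note that $o(\xi)\mid 4$ is \emph{not} automatic from $\xi^{\oplus 4}$ trivial unless one knows $o(\xi)$ divides $4$; however $o(\xi_{\Gamma,n})\mid 4$ does follow because $\xi^{\oplus 4}$ trivial forces the order to divide $4$ (the set of $N$ with $\xi^{\oplus N}$ trivial is closed under addition, hence under taking gcd with $4$). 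Thus $o(\xi_{\Gamma,n})\in\{1,2,4\}$, and combined with the lower bound $o(\xi_{\Gamma,n})\geq 2$ we conclude $o(\xi_{\Gamma,n})\in\{2,4\}$, as claimed.

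The main obstacle is purely bookkeeping: one must make sure the graph-in-surface embedding and the resulting configuration-space map are genuinely $\Sigma_n$-equivariant (this is clear, since a topological embedding $\Gamma\hookrightarrow\Sigma_g$ induces a coordinatewise embedding of ordered configuration spaces commuting with the symmetric group action), and one must correctly invoke the divisibility structure of orders rather than merely citing "$\xi^{\oplus 4}$ trivial." Neither of these is deep, so the lemma is essentially a formal consequence of Lemma \ref{genoblemma}, Remark \ref{Ynonplaneremark}, and the known value $o(\xi_{\Sigma_g,n})=4$ for positive-genus surfaces.
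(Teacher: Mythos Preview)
Your approach is essentially identical to the paper's: embed $\Gamma$ into a closed orientable surface of positive genus, invoke \cite{CCMM89} together with Lemma~\ref{genoblemma} to bound $o(\xi_{\Gamma,n})\leq 4$, and use Remark~\ref{Ynonplaneremark} to rule out order $1$.

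One small correction to your bookkeeping: the parenthetical justification for $o(\xi_{\Gamma,n})\mid 4$ --- that the set $\{N:\xi^{\oplus N}\ \text{trivial}\}$ is ``closed under addition, hence under taking gcd with $4$'' --- is not valid as stated. A sub-semigroup of $(\mathbb{Z}_{>0},+)$ need not contain the gcd of two of its elements (e.g.\ the semigroup generated by $3$ and $4$ omits $1$, $2$, and $5$), so closure under addition alone does not force divisibility of the unstable order. The clean way to exclude $o(\xi_{\Gamma,n})=3$ goes through the \emph{stable} order, which genuinely lives in the abelian group $\widetilde{KO}$: if both $\xi^{\oplus 3}$ and $\xi^{\oplus 4}$ were trivial then $[\xi]=0$ in $\widetilde{KO}$, i.e.\ $s(\xi_{\Gamma,n})=1$, contradicting the lower bound $s(\xi_{\Gamma,n})\geq 2$ you already extracted from Remark~\ref{Ynonplaneremark}. (The paper's own proof glosses over this point, simply asserting the order ``can only be $1$, $2$ or $4$''.)
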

\begin{proof}
It was showed in \cite{CCMM89} that the order of $\xi_{M, n}$ is $4$ for any closed orientable Riemann surface $M$ of genus greater than or equal to one. Then by the fact that $\Gamma$ can be embedded into orientable surface of higher genus and Lemma \ref{genoblemma} , $o(\xi_{\Gamma, n})$ can only $1$, $2$ or $4$. However, $1$ is impossible by Remark \ref{Ynonplaneremark}.
\end{proof}

Two famous examples of non-planar graphs are the complete graph on five vertices $K_5$ and the complete bipartite graph $K_{3,3}$. A {\it Kuratowski graph} is a subdivision of $K_5$ or $K_{3,3}$.
Here, a {\it subdivision} of a graph $\Gamma$ is a graph resulting from the subdivision of edges of $\Gamma$ by introducing new vertices on them. The following criterion of nonplanar graph is classical.
\begin{theorem}[Kuratowski’s Theorem, 1930; \cite{Kur30}]\label{Kuthm}
A graph $\Gamma$ is nonplanar if and only if $\Gamma$ contains a Kuratowski subgraph. ~$\qqed$
\end{theorem}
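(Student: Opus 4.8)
The plan is to prove both implications, of which the forward (``only if'') direction is elementary and the reverse direction is the real content.

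\emph{Easy direction.} First I would record, via Euler's formula, that $K_5$ and $K_{3,3}$ are themselves nonplanar: a simple planar graph on $v\geq 3$ vertices has at most $3v-6$ edges, which rules out $K_5$ ($10>9$), and a bipartite simple planar graph has at most $2v-4$ edges, which rules out $K_{3,3}$ ($9>8$). Since planarity is inherited by subgraphs and is unaffected by subdividing or suppressing degree-two vertices, any graph containing a subdivision of $K_5$ or $K_{3,3}$ is nonplanar.

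\emph{Hard direction (contrapositive).} Suppose the statement fails and choose a counterexample $\Gamma$ that is nonplanar, contains no Kuratowski subgraph, and has the fewest edges among all such graphs. I would first show $\Gamma$ is $3$-connected. Minimality forces every vertex to have degree $\geq 3$ (a vertex of degree $\leq 2$ could be deleted or suppressed, staying nonplanar and Kuratowski-free). If $\Gamma$ had a cut vertex, it would be a union of proper subgraphs meeting in a single vertex, each planar by minimality and embeddable with that vertex on the outer face, and these embeddings would glue to an embedding of $\Gamma$, a contradiction. If $\Gamma$ had a $2$-separator $\{u,v\}$, I would split $\Gamma$ along it into its ``sides'' and add the edge $uv$ to each side; each resulting graph is smaller and still Kuratowski-free (a Kuratowski subdivision through the new edge $uv$ can be rerouted through the other side), hence planar, hence embeddable with $uv$ on the outer face, and reassembling yields an embedding of $\Gamma$, again a contradiction. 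So $\Gamma$ is $3$-connected.

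\emph{The $3$-connected case.} Here I would run an induction built on the classical fact that every $3$-connected graph on at least five vertices has an edge $e=xy$ whose contraction $\Gamma/e$ is again $3$-connected (equivalently, invoke Tutte's wheel theorem). The contracted graph $\Gamma/e$ has fewer edges; a somewhat delicate lemma shows it is still Kuratowski-free, since a subdivision of $K_5$ or $K_{3,3}$ in $\Gamma/e$ can be lifted to one in $\Gamma$ by splitting the branch vertex $v_{xy}$ back into $x$ and $y$ and checking the few ways the incident branch paths can be distributed. Hence $\Gamma/e$ is planar, and by Whitney's theorem its $2$-cell embedding in the sphere is combinatorially unique. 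Let $C$ be the boundary cycle of the face of the induced embedding of $\Gamma/e - v_{xy} = \Gamma - x - y$ into which $v_{xy}$ had been placed; then every neighbour of $x$ other than $y$, and every neighbour of $y$ other than $x$, lies on $C$. If these two neighbour sets occupy disjoint arcs of $C$ (shared endpoints allowed), one can redraw $x$ and $y$ separately inside the face to get a planar embedding of $\Gamma$, contradicting nonplanarity; so the two sets must \emph{interleave} on $C$, and a short case check shows each interleaving pattern exhibits an explicit subdivision of $K_{3,3}$ (or, in the pattern with several common neighbours, of $K_5$) inside $\Gamma$, contradicting Kuratowski-freeness. Thus no counterexample exists.

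\emph{Main obstacle.} The crux is the $3$-connected case: both the lifting lemma (Kuratowski subgraph in $\Gamma/e$ $\Rightarrow$ Kuratowski subgraph in $\Gamma$) and the final interleaving analysis require careful, if elementary, case distinctions, and they lean on the two auxiliary structural results — the $3$-connected contraction lemma and Whitney's uniqueness of planar embeddings — which I would either cite or establish separately.
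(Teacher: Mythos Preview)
Your outline is a faithful sketch of the standard modern proof of Kuratowski's theorem (reduction to the $3$-connected case, contraction of a $3$-connected-preserving edge, lifting of Kuratowski subgraphs through the contraction, and the interleaving analysis on the facial cycle, with Whitney's uniqueness in the background). The argument is correct as far as it goes; the points you flag as delicate --- the lifting lemma and the interleaving case analysis --- are exactly where the work lies, and you have identified the needed auxiliary results.

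However, there is nothing to compare against: the paper does not prove this statement at all. Theorem~\ref{Kuthm} is quoted as a classical 1930 result of Kuratowski with a bare citation and a terminal $\Box$; the paper merely \emph{uses} it (in the proof of Proposition~\ref{nonplanarorderprop}) to guarantee that a nonplanar graph contains a proper Kuratowski subgraph. So your proposal is not an alternative to the paper's proof but rather a self-contained argument where the paper simply invokes the literature. If your goal is to match the paper, a one-line citation suffices; if your goal is an independent proof, your plan is sound.
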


\begin{lemma}[Abrams; Section $5.1$ in \cite{Abrams00}]\label{k533=surfacelemma}
\[
A(K_5, 2)\simeq  \sharp_{6} T^2, \ \  A(K_{3,3}, 2)\simeq  \sharp_{4} T^2,
\]
where $\sharp_{k}T^2$ is the orientable closed surface of genus $k$. ~$\qqed$
\end{lemma}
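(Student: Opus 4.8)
The plan is to compute the cube complexes $A(K_5,2)$ and $A(K_{3,3},2)$ directly, to show that each is a connected closed orientable surface, and then to read off the genus from the Euler characteristic.

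First I would record the cell structure. A cell of $A(\Gamma,2)$ is a product $\sigma_1\times\sigma_2$ of cells of $\Gamma$ whose closures are disjoint; hence the $0$-cells are the ordered pairs of distinct vertices, the $1$-cells are the ordered pairs consisting of a vertex and an edge missing it (in either order), and the $2$-cells are the ordered pairs of vertex-disjoint edges. Since every vertex of $K_5$ has degree $4$ and every vertex of $K_{3,3}$ has degree $3$, this gives, for $K_5$, $5\cdot 4=20$ vertices, $2\cdot 5\cdot(10-4)=60$ edges and $2\big(\binom{10}{2}-5\binom{4}{2}\big)=30$ squares, and, for $K_{3,3}$, $6\cdot 5=30$ vertices, $2\cdot 6\cdot(9-3)=72$ edges and $2\big(\binom{9}{2}-6\binom{3}{2}\big)=36$ squares.

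Next I would verify that $A(\Gamma,2)$ is a closed surface for $\Gamma\in\{K_5,K_{3,3}\}$. The key point is that, although $\Gamma$ itself is not a manifold at its vertices of degree $\ge 3$, the disjoint-support condition shrinks the local model in $A(\Gamma,2)$ to a product of two arcs. Concretely, a $1$-cell $e\times\{v\}$ lies on precisely the squares $e\times f$ with $f$ an edge at $v$ disjoint from $e$, and there are exactly two such $f$ in both cases (for $K_5$ because $f$ must avoid the two endpoints of $e$ and the vertex $v$, leaving two of the five vertices; for $K_{3,3}$ by the analogous count using the bipartition). Likewise the link of a $0$-cell $(v,w)$ has as vertices the edges at $v$ missing $w$ (paired with $w$) together with $v$ paired with the edges at $w$ missing $v$, and as edges the squares at $(v,w)$; a short inspection shows this link is a single circle — for $(v,w)$ in $K_5$ it is $K_{3,3}$ with a perfect matching deleted, namely a hexagon. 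Thus $A(\Gamma,2)$ is a closed $2$-manifold, and it is connected because $K_5$ and $K_{3,3}$ have a vertex of degree $\ge 3$, which allows the two particles to be interchanged.

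The step I expect to be the main obstacle is orientability, since a priori $A(\Gamma,2)$ could be a non-orientable surface with the same Euler characteristic. I would dispatch it either by exhibiting a coherent orientation of the squares — fix an orientation of each edge of $\Gamma$ and a sign for each ordered pair of disjoint edges, and check that the induced orientations cancel across every $1$-cell — or, equivalently, by computing $H_1(A(\Gamma,2);\Z)$ and checking that it is free, so that the mod-$2$ fundamental class lifts to $\Z$. (One may also observe that the free $\Sigma_2$-quotient has odd Euler characteristic, $-5$ resp. $-3$, hence is non-orientable, and that a non-orientable closed surface has exactly one double cover with orientable total space, namely its orientation cover; so once orientability is known the identification is forced.) Granting orientability, the classification of closed surfaces together with $\chi(A(K_5,2))=20-60+30=-10$ and $\chi(A(K_{3,3},2))=30-72+36=-6$ yields $A(K_5,2)\cong\sharp_6 T^2$ and $A(K_{3,3},2)\cong\sharp_4 T^2$, as claimed.
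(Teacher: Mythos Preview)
The paper does not give its own proof of this lemma; it simply cites Section~5.1 of Abrams's thesis and places a $\Box$. So there is no in-paper argument to compare against. Your proposal is exactly the natural reconstruction of what one does there: enumerate the cells of $A(\Gamma,2)$, check that every $1$-cell lies on precisely two squares and that every vertex link is a single circle, and then read off the genus from the Euler characteristic. Your counts for $K_5$ and $K_{3,3}$ are correct, as is the link analysis (for $K_{3,3}$ note that the link at $(v,w)$ is a hexagon when $v,w$ lie in the same part of the bipartition and a $4$-cycle when they lie in different parts, but in either case a single circle).

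The one place where your write-up is still only a plan is orientability. Your parenthetical about the quotient having odd Euler characteristic is correct and tells you the quotient is non-orientable, but as you note this does not by itself force the double cover to be orientable, since a non-orientable surface has many non-orientable connected double covers as well. The naive product orientation on the squares (orient each edge of $\Gamma$ and take the product) does \emph{not} patch consistently---e.g.\ on $e\times\{v\}$ in $K_5$ the two neighbouring edges $f_1,f_2$ at $v$ may both have $v$ as their initial vertex---so some twisting sign is genuinely needed. Either carry out that sign assignment explicitly, or follow your second suggestion and compute $H_1(A(\Gamma,2);\Z)$ from the cell complex to see it is torsion-free. Once that is done, your argument is complete and matches what the cited source does.
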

In the following, Proposition \ref{Kur2orderprop} is a special case of Proposition \ref{Kurnorderprop}, but it is proved in a different and simpler way.
\begin{proposition}\label{Kur2orderprop}
Let $\Gamma$ be a Kuratowski graph. Then 
\[
s(\xi_{\Gamma, 2})=o(\xi_{\Gamma, 2})=4.
\]
\end{proposition}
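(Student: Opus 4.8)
The plan is to reduce the general Kuratowski graph to the two base cases $K_5$ and $K_{3,3}$ already understood via Lemma \ref{k533=surfacelemma}, and then to squeeze the order between a lower bound of $4$ and an upper bound of $4$. First I would establish the upper bound $o(\xi_{\Gamma,2})\leq 4$. This is immediate from Lemma \ref{nonplanarorderlemma}, which already tells us $o(\xi_{\Gamma,2})$ is $2$ or $4$, so it remains only to rule out $2$; equivalently, to show $s(\xi_{\Gamma,2})\geq 4$, since $s(\xi_{\Gamma,2})\mid o(\xi_{\Gamma,2})$ and we would then be forced into the value $4$.

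For the lower bound, the key point is a comparison between $\Gamma$ and one of the surfaces $\sharp_k T^2$. A Kuratowski graph is by definition a subdivision of $K_5$ or $K_{3,3}$; a subdivision is homeomorphic to the original graph, so $A(\Gamma,2)$ is $\Sigma_2$-homotopy equivalent to $A(K_5,2)$ or $A(K_{3,3},2)$, hence by Lemma \ref{k533=surfacelemma} to $\sharp_6 T^2$ or $\sharp_4 T^2$. Using Lemma \ref{eqretrlemma} (after passing to a simple-graph model, i.e. a sufficiently subdivided representative, which changes nothing up to homeomorphism) we identify $\Conf(\Gamma,2)$ $\mathbb{Z}/2$-equivariantly with this surface. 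Now I want to invoke the result of Cohen--Cohen--Mann--Milgram \cite{CCMM89} that $o(\xi_{M,2})=4$ for a closed orientable surface $M$ of genus $\geq 1$. The cleanest way is via Lemma \ref{genoblemma}: I need a $\Sigma_2$-equivariant map going the \emph{right} direction, namely from $\Conf(M,2)$ into $\Conf(\Gamma,2)$, which would give $s(\xi_{M,2})\leq s(\xi_{\Gamma,2})$ and hence $4\leq s(\xi_{\Gamma,2})$. Since $\Conf(\Gamma,2)\simeq_{\mathbb{Z}/2} M$ with $M$ a genus $\geq 1$ surface, the identity (or the equivariant homotopy equivalence itself) furnishes such a map directly — no embedding is needed, the spaces are equivariantly equivalent.

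The main obstacle is making the equivariant identification $\Conf(\Gamma,2)\simeq_{\mathbb{Z}/2}\sharp_k T^2$ genuinely $\Sigma_2$-equivariant rather than merely a nonequivariant homotopy equivalence. Lemma \ref{eqretrlemma} gives the equivariant retraction $\Conf(\Gamma,2)\to A(\Gamma,2)$ for a simple graph $\Gamma$, and Abrams' computation in Lemma \ref{k533=surfacelemma} is an equivariant statement about $A(K_5,2)$ and $A(K_{3,3},2)$; but I must check that Abrams' surface identification respects the $\mathbb{Z}/2$-action, and that subdividing edges to reach a simple sufficiently-subdivided model (so that Lemma \ref{eqretrlemma} and Theorem \ref{retrlemma} apply) does not disturb the $\Sigma_2$-equivariant homotopy type — this follows because subdivision is a homeomorphism of the underlying space $\Gamma$, hence induces a $\Sigma_2$-equivariant homeomorphism of $\Conf(\Gamma,2)$. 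Once this is in place, the chain $4 = s(\xi_{M,2}) \leq s(\xi_{\Gamma,2}) \leq o(\xi_{\Gamma,2}) \leq 4$ closes, forcing $s(\xi_{\Gamma,2})=o(\xi_{\Gamma,2})=4$.
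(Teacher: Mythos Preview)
There is a genuine gap in your lower-bound argument. You correctly identify $\Conf(\Gamma,2)\simeq_{\mathbb{Z}/2}M$ with $M=\sharp_6 T^2$ or $\sharp_4 T^2$, but you then want a $\Sigma_2$-equivariant map $\Conf(M,2)\to\Conf(\Gamma,2)$ in order to apply Lemma~\ref{genoblemma} together with the Cohen--Cohen--Mann--Milgram result about $\xi_{M,2}$. The sentence ``the identity (or the equivariant homotopy equivalence itself) furnishes such a map'' is where things break: the equivariant equivalence you have is between $\Conf(\Gamma,2)$ and the surface $M$, \emph{not} between $\Conf(\Gamma,2)$ and $\Conf(M,2)$. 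These are very different spaces---$M$ is a closed $2$-manifold while $\Conf(M,2)$ is an open $4$-manifold---and there is no evident $\Sigma_2$-map $\Conf(M,2)\to M$ with the required compatibility. So the comparison with $\xi_{M,2}$ never gets off the ground, and the lower bound $s(\xi_{\Gamma,2})\geq 4$ is not established.

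The paper avoids this detour entirely and argues directly with characteristic classes. From the equivariant identification $\Conf(\Gamma,2)\simeq_{\mathbb{Z}/2}\sharp_{2k}T^2$ (with $k=3$ or $2$) and the fact that the $\mathbb{Z}/2$-action is free, it identifies the quotient $\Conf(\Gamma,2)/\Sigma_2$ with the nonorientable surface $\sharp_{2k+1}P^2$ via the orientation double cover $\mathbb{Z}/2\to\sharp_{2k}T^2\to\sharp_{2k+1}P^2$. The bundle $\xi_{\Gamma,2}$ is a line bundle plus a trivial line, and its determinant line is classified by the connecting map $\pi_1(\Conf(\Gamma,2)/\Sigma_2)\to\Sigma_2$, which is exactly the orientation character; hence $\omega_1(\xi_{\Gamma,2})=\omega_1(\sharp_{2k+1}P^2)$ and $\omega_1^2(\xi_{\Gamma,2})\neq 0$. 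Then $\omega(\xi_{\Gamma,2}^{\oplus 2})=(1+\omega_1)^2=1+\omega_1^2\neq 1$, so $\xi_{\Gamma,2}^{\oplus 2}$ is not stably trivial and $s(\xi_{\Gamma,2})\neq 2$. Combined with Lemma~\ref{nonplanarorderlemma} this forces $s(\xi_{\Gamma,2})=o(\xi_{\Gamma,2})=4$. Your reduction to $K_5$ and $K_{3,3}$ via subdivision is fine; the fix is to replace the attempted comparison with $\Conf(M,2)$ by this direct Stiefel--Whitney computation on the nonorientable quotient surface.
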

\begin{proof}
First since there is the covering map
\[
\mathbb{Z}/2\rightarrow \sharp_{2k} T^2\rightarrow \sharp_{2k+1} P^2,
\]
where $\sharp_{2k+1} P^2$ is the unorientable closed surface of genus $2k+1$,
we see from Lemma \ref{eqretrlemma} and Lemma \ref{k533=surfacelemma} that
\[
\begin{split}
& \Conf(K_5, 2)/\Sigma_2 \simeq A(K_5, 2)/\Sigma_2 \simeq \sharp_{7} P^2\\
 &\Conf(K_{3,3}, 2)/\Sigma_2 \simeq A(K_{3,3}, 2)/\Sigma_2 \simeq \sharp_{5} P^2.
\end{split}
\]
Further, notice that the determinant line bundle of $\xi_{\Gamma,2}$ is determined by the connecting epimorphism
\[
h: \pi_{1}(\Conf(\Gamma, 2)/\Sigma_2)\rightarrow \Sigma_2\cong \mathbb{Z}/2,
\]
which, as the orientation character, corresponds exactly to the first Stiefel-Whitney class $\omega_1(\sharp_{2k+1} P^2)$ with $k=3$, or $2$. Hence, in either case
\[
\omega_1^2(\xi_{\Gamma,2})=\omega_1^2(\sharp_{2k+1} P^2)\neq 0.
\]
Since $\xi_{\Gamma,2}$ is isomorphic to the direct sum of a line bundle and the trivial line bundle, it follows that
\[
\omega(\xi_{\Gamma,2}^{\oplus 2})=(1+\omega_1(\xi_{\Gamma,2}))^2=1+\omega_1^2(\sharp_{2k+1} P^2)\neq 1.
\]
Hence both the order $o(\xi_{\Gamma,2})$ and the stable order $s(\xi_{\Gamma,2})$ can not be $2$. The proposition then follows from Lemma \ref{nonplanarorderlemma}.
\end{proof}

\begin{proposition}\label{Kurnorderprop}
Let $\Gamma$ be a Kuratowski graph. Then 
\[
s(\xi_{\Gamma, n})=o(\xi_{\Gamma, n})=4,
\]
for any $n\geq 2$.
\end{proposition}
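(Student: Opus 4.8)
The plan is to deduce the general statement from the case $n=2$, which is Proposition \ref{Kur2orderprop}, by means of Lemma \ref{genoblemma}. Since $\xi_{\Gamma,n}$ depends only on the homeomorphism type of $\Gamma$, I may first replace $\Gamma$ by a homeomorphic, sufficiently subdivided model. I then want to construct a $\Sigma_2$-equivariant map
\[
\Phi\colon\ \Conf(\Gamma,2)\ \longrightarrow\ \Conf(\Gamma,n),
\]
where $\Sigma_2$ acts on the target through the inclusion $\Sigma_2\hookrightarrow\Sigma_n$ as the transposition of the first two coordinates. Such a $\Phi$ descends to $\Conf(\Gamma,2)/\Sigma_2\to\Conf(\Gamma,n)/\Sigma_n$ and pulls $\xi_{\Gamma,n}$ back to $\xi_{\Gamma,2}\oplus\epsilon^{n-2}$, so Lemma \ref{genoblemma} yields $s(\xi_{\Gamma,2})\le s(\xi_{\Gamma,n})$. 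By Proposition \ref{Kur2orderprop} the left-hand side equals $4$, while Lemma \ref{nonplanarorderlemma} gives $o(\xi_{\Gamma,n})\in\{2,4\}$; since $s(\xi_{\Gamma,n})\mid o(\xi_{\Gamma,n})$, this forces $s(\xi_{\Gamma,n})=o(\xi_{\Gamma,n})=4$.

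To build $\Phi$ I would send $(x,y)$ to $(x,y,z_1(x,y),\dots,z_{n-2}(x,y))$, where the auxiliary points $z_i(x,y)$ are placed on short initial segments of the edges incident to $x$ and to $y$ --- pushed up against the two points deleted in $\Gamma\setminus\{x,y\}$ --- and are chosen to depend symmetrically on $x$ and $y$, so that $\Phi$ becomes $\Sigma_2$-equivariant. This is the analogue, for the target configuration, of the move in Proposition \ref{planarorderpro}. The essential new difficulty is that there one could park the $n-2$ extra points at fixed positions in $K\setminus i(K)$, using a proper self-embedding $i$ of a $Y$-subgraph $K\subseteq\Gamma$, whereas a Kuratowski graph is minimally nonplanar: it has no proper Kuratowski subgraph and admits no proper self-embedding (ruled out by a short Euler-characteristic argument, using that it has no leaves). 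Hence there is no fixed safe region, and the auxiliary points are forced to move in order to dodge $x$ and $y$.

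The hard part will be the continuity and global consistency of this choice. The combinatorial type of $\Gamma\setminus\{x,y\}$ changes whenever $x$ or $y$ passes through a vertex $v$ of degree $\ge 3$: the two ends along an edge through $v$ must be traded for $\deg(v)$ ends at $v$, and one must route the $z_i$ through such transitions without collisions and without jumps, and then verify that the resulting assignment is well defined around every loop of $\Conf(\Gamma,2)$, not merely locally. Sufficient subdivision supplies the room to do this, but pinning down the bookkeeping --- for instance, organizing the $z_i$ along a single long subdivided cycle or edge and prescribing exactly how they yield when $x$ or $y$ approaches --- is the technical core, and is where I expect most of the work to lie.

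An alternative that avoids the explicit map is to show directly that $\xi_{\Gamma,n}^{\oplus 2}$ is not stably trivial, by computing the obstruction $w_1(\xi_{\Gamma,n})^2=Sq^1 w_1(\xi_{\Gamma,n})$. This class is the Bockstein of the sign homomorphism $\pi_1(\Conf(\Gamma,n)/\Sigma_n)\to\Sigma_n\to\Z/2$, hence is nonzero precisely when that homomorphism fails to lift to $\Z/4$; one would establish nonvanishing by exhibiting, in $H_1$ of the unordered graph braid group of $\Gamma$, a $\Z/2$-summand on which the sign is nontrivial. This is the general-$n$ counterpart of the fact used for $n=2$ in Proposition \ref{Kur2orderprop} that $\sharp_{2k+1}P^2$ has $w_1^2\neq 0$, and it again reduces the proposition to locating one nonzero torsion class --- at the cost of requiring homological input on graph braid groups (via discrete Morse theory on the Abrams model, or via known computations).
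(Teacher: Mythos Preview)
Your ``alternative'' route is exactly the one the paper takes, and your first route is not needed. The paper does not attempt to build a $\Sigma_2$-equivariant map $\Conf(\Gamma,2)\to\Conf(\Gamma,n)$ for the Kuratowski graph $\Gamma$ itself; the obstacle you correctly isolate (no proper self-embedding, no proper Kuratowski subgraph) is real, and the paper simply sidesteps it. Instead it invokes Theorem~5 of \cite{HKRS14}, which gives
\[
H_1\bigl(\Conf(\Gamma,n)/\Sigma_n;\Z\bigr)\cong H_1\bigl(\Conf(\Gamma,2)/\Sigma_2;\Z\bigr)\cong \Z^{2k}\oplus\Z/2,
\]
with the $\Z/2$-summand generated by a $Y$-cycle. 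This is precisely the ``known computation'' of graph braid group homology you allude to. To pin down $\omega_1(\xi_{\Gamma,n})$ as the torsion generator, the paper uses the map $\Conf(K,2)\to\Conf(K,n)\to\Conf(\Gamma,n)$ for a $Y$-subgraph $K\subset\Gamma$ --- which \emph{is} a proper subgraph and does admit a proper self-embedding, so the construction from Proposition~\ref{planarorderpro} applies verbatim. Since $\omega_1$ then lies in the $\Z/2$-torsion part of $H^1$, the Bockstein $Sq^1\omega_1=\omega_1^2$ is nonzero, giving $\omega(\xi_{\Gamma,n}^{\oplus 2})\neq 1$ and hence $s(\xi_{\Gamma,n})\neq 2$; Lemma~\ref{nonplanarorderlemma} finishes.

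So: drop the first plan, keep the second, and note that the required homological input is available off the shelf from \cite{HKRS14}. The only piece you were missing is that the map used to detect $\omega_1$ can come from a $Y$-subgraph rather than from $\Gamma$ itself; this avoids all the continuity bookkeeping you were worried about.
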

\begin{proof}
The case when $n=2$ was showed in Proposition \ref{Kur2orderprop}. For general $n$, 
\begin{equation}\label{H1uconfkurneq}
H_1(\Conf(\Gamma, n)/\Sigma_n;\mathbb{Z})\cong H_1(\Conf(\Gamma, 2)/\Sigma_2;\mathbb{Z})\cong \mathop{\oplus}\limits_{2k}\mathbb{Z}\oplus \mathbb{Z}/2,
\end{equation}
 where $k=3$, or $2$ according to $\Gamma\cong K_5$ or $K_{3,3}$ by Theorem 5 of \cite{HKRS14} and Lemma \ref{k533=surfacelemma}. Moreover, by the discussion before Theorem 5 of \cite{HKRS14}, both the $\mathbb{Z}/2$ summands in the homology are determined by a $Y$-subgraph $K$ of $\Gamma$. As in the proof of Proposition \ref{planarorderpro}, we can define a $\Sigma_2$-equivariant embedding $\tilde{i}: \Conf(K,2)\stackrel{}{\rightarrow} \Conf(K,n)$ (\ref{yembeq}) from any proper self-embedding $i: K\hookrightarrow K$ of $K$, and similar to (\ref{confk2gneq}) consider the composition of $\Sigma_2$-equivariant maps
\[
k: \Conf(K,2)\stackrel{\tilde{i}}{\longrightarrow} \Conf(K,n)\stackrel{j}{\longrightarrow} \Conf(\Gamma, n),
\]
where $j$ is the canonical embedding.
It follows that $k$ induces a map
\[
\tilde{k}: \Conf(K,2)/\Sigma_2\rightarrow \Conf(\Gamma, n)/\Sigma_n.
\]
Then there is the commutative diagram
\begin{gather*}
\begin{aligned}
\xymatrix{
\mathbb{Z}\cong H_1(\Conf(K,2)/\Sigma_2;\mathbb{Z}) \ar[r]^{\ \ \tilde{k}_\ast} \ar[d]^{\rho_2}  & H_1(\Conf(\Gamma, n)/\Sigma_n;\mathbb{Z}) \ar[d]^{\rho_2}  \ar[r]^>>>>>{p} &\mathbb{Z}/2 \ar@{=}[d]\\
\mathbb{Z}/2\cong H_1(\Conf(K,2)/\Sigma_2;\mathbb{Z}/2) \ar[r]^>>>>>{\tilde{k}_\ast} & H_1(\Conf(\Gamma, n)/\Sigma_n;\mathbb{Z}/2) \ar[r]^>>>>>{p} & \mathbb{Z}/2,
}
\end{aligned}
\label{rho2hom1diag}
\end{gather*}
where both $\rho_2$ are the mod-$2$ reductions, and both $p$ are the projections onto the $\mathbb{Z}/2$-summands determined by the $Y$-subgraph $K$. Notice that the composition of maps in the top row is the mod-$2$ reduction. It follows that the composition of maps in the bottom row is an isomorphism. Then since by Lemma \ref{s1orderpro} $\omega_1(\xi_{K,2})\neq 0$ is the generator of $H^1(\Conf(K,2)/\Sigma_2;\mathbb{Z}/2)$, $\omega_1(\xi_{\Gamma,n})\neq 0$.
Moreover, by (\ref{H1uconfkurneq}) the Bockstein homomorphism
\[
\beta=Sq^1: H^1(\Conf(\Gamma, n)/\Sigma_n;\mathbb{Z}/2)\cong \mathop{\oplus}\limits_{2k}\mathbb{Z}/2\oplus \mathbb{Z}/2\stackrel{}{\longrightarrow} H^2(\Conf(\Gamma, n)/\Sigma_n;\mathbb{Z}/2)
\]
is trivial on the $\mathop{\oplus}\limits_{2k}\mathbb{Z}/2$-summand and is nontrivial on the last $\mathbb{Z}/2$-summand corresponding to the $Y$-subgraph $K$.
Therefore $\omega_1(\xi_{\Gamma,n})$ is the generator of the last $\mathbb{Z}/2$ and satisfies $\omega_1^2(\xi_{\Gamma,n})=Sq^1(\omega_1(\xi_{\Gamma,n}))\neq 0$. 
Hence
\[
\omega(\xi_{\Gamma,n}^{\oplus 2})\equiv 1+\omega_1^2(\xi_{\Gamma,n})~{\rm mod}~{H^{\geq 3}(\Conf(\Gamma, n)/\Sigma_n;\mathbb{Z}/2)}
\]
is not $1$, which implies that $\xi_{\Gamma,n}^{\oplus 2}$ is not stably trivial, and the stable order $s(\xi_{\Gamma,2})$ can not be $2$. The proposition then follows from Lemma \ref{nonplanarorderlemma}.
\end{proof}

The general case can then be determined by either Proposition \ref{Kur2orderprop} or Proposition \ref{Kurnorderprop}.
\begin{proposition}\label{nonplanarorderprop}
Let $\Gamma$ be a nonplanar finite graph but not a Kuratowski graph. Then
\[
s(\xi_{\Gamma, n})=o(\xi_{\Gamma, n})=4,
\]
for any $n\geq 2$.
\end{proposition}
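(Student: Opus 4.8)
The plan is to obtain this as a formal consequence of the Kuratowski case, via the monotonicity supplied by Lemma \ref{genoblemma}. Since $\Gamma$ is nonplanar, Kuratowski's Theorem \ref{Kuthm} provides a subgraph $\Gamma'\subseteq\Gamma$ which is a Kuratowski graph. The coordinatewise inclusion $\Gamma'\times\cdots\times\Gamma'\hookrightarrow\Gamma\times\cdots\times\Gamma$ restricts to an embedding
\[
\Conf(\Gamma',n)\hookrightarrow\Conf(\Gamma,n),
\]
which is $\Sigma_n$-equivariant for the identity subgroup inclusion $\Sigma_n=\Sigma_n$. Applying Lemma \ref{genoblemma} (with $m=n$) then gives $s(\xi_{\Gamma',n})\leq s(\xi_{\Gamma,n})$ and $o(\xi_{\Gamma',n})\leq o(\xi_{\Gamma,n})$.

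Next I would feed in Proposition \ref{Kurnorderprop}, which asserts $s(\xi_{\Gamma',n})=o(\xi_{\Gamma',n})=4$ for every $n\geq2$; combined with the inequalities above this yields $s(\xi_{\Gamma,n})\geq4$ and $o(\xi_{\Gamma,n})\geq4$. To close the gap from above I would invoke Lemma \ref{nonplanarorderlemma}, according to which $o(\xi_{\Gamma,n})\in\{2,4\}$, forcing $o(\xi_{\Gamma,n})=4$; and since $s(\xi_{\Gamma,n})$ divides $o(\xi_{\Gamma,n})=4$ while $s(\xi_{\Gamma,n})\geq4$, also $s(\xi_{\Gamma,n})=4$. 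Alternatively one may route through Proposition \ref{Kur2orderprop} instead: for $n\geq3$, embed $\Conf(\Gamma',2)$ into $\Conf(\Gamma,n)$ by $(x,y)\mapsto(x,y,a_1,\ldots,a_{n-2})$ with the $a_i$ fixed distinct points chosen in $\Gamma\setminus\Gamma'$ (nonempty since $\Gamma$ is not itself a Kuratowski graph), and then apply Lemma \ref{genoblemma} together with the computation $s(\xi_{\Gamma',2})=4$.

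Since every ingredient is already in place, I do not expect a real obstacle: the proposition is essentially a corollary of Proposition \ref{Kurnorderprop}. The only point deserving a line of justification is the $\Sigma_n$-equivariance of $\Conf(\Gamma',n)\hookrightarrow\Conf(\Gamma,n)$, which is immediate because the map is the restriction of the coordinatewise inclusion; in the alternative route one should instead check that $\Gamma'$ is a proper subspace of $\Gamma$ so that the auxiliary points $a_1,\ldots,a_{n-2}$ exist, which again follows from the hypothesis that $\Gamma$ is not a Kuratowski graph.
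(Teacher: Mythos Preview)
Your proposal is correct and matches the paper's approach. The paper in fact opts for your ``alternative route'': it invokes Proposition \ref{Kur2orderprop} via the $\Sigma_2$-equivariant embedding $\Conf(K,2)\hookrightarrow\Conf(\Gamma,n)$, $(x,y)\mapsto(x,y,a_1,\ldots,a_{n-2})$ with $a_i\in\Gamma\setminus K$, rather than your primary route through Proposition \ref{Kurnorderprop}; but the paper explicitly remarks just before the proposition that either of the two propositions suffices, so both of your arguments are sanctioned.
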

\begin{proof}
By assumption, $\Gamma$ contains a proper Kuratowski subgraph $K$ by Theorem \ref{Kuthm}. Choose $n-2$ distinct points $a_1$, $a_2,\ldots, a_{n-2}\in \Gamma-K$. Then there is a $\Sigma_2$-equivariant embedding 
\[
\widetilde{\Phi}: \Conf(K,2)\hookrightarrow \Conf(\Gamma, n)
\]
sending $(x, y)$ to $(x,y, a_1, a_2,\ldots, a_{n-2})$. Hence, by Lemma \ref{genoblemma} and Proposition \ref{Kur2orderprop}
\[
o(\xi_{\Gamma, n})\geq s(\xi_{\Gamma, n})\geq s(\xi_{K,2})=4.
\]
The proposition now follows from Lemma \ref{nonplanarorderlemma}.
\end{proof}
\section{}\label{sec: stable}
Given two complexes $X$ and $Z$, we can consider the so-called {\it $n$-th generalized divided power} of $Z$ associated to $X$ for any $n\geq 2$ defined by
\[
D_n(X, Z):=\Conf(X, n)^{+}\wedge_{\Sigma_n} Z^{\wedge n},
\]
where $Z^{\wedge n}$ is the $n$-fold self-smashed product of $Z$ and inherits a $\mathbb{Z}_n$-action from the canonical permutation on $Z^{\times n}$. 
Let $\Sigma^i Y$ be the $i$-fold suspension of the complex $Y$.
The significance of the generalized divided powers is due to a general version of Snaith's stable splitting \cite{Snaith74}. Indeed, following \cite{CT78} define the {\it labelled configuration space of $X$ with labels in $Z$} by
\[
\Conf(X, Z):=\mathop{\coprod}\limits_{n} \Conf(X, n)\times_{\Sigma_n} Z^{\times n}/\sim,
\]
where the equivalence relation $\sim$ is generated by
\[
\{(x_1, \ldots, x_n, z_1,\ldots z_n) \sim (x_1, \ldots, x_{n-1}, z_1,\ldots, z_{n-1}), \ ~~~{\rm if}~z_n=\ast \}
\]
with $\ast$ the based point of $Z$. Then when $Z$ is path connected there is the stable decomposition \cite{CT78, CMT78}
\[
\Sigma^{\infty} \Conf(X, Z)\simeq \mathop{\bigvee}\limits_{n=1}^{\infty} \Sigma^{\infty} D_n(X, Z).
\]

The following lemma is due to an unpublished manuscript of Cohen and was reproved by Ren in \cite{Ren18}.
\begin{lemma}[Lemma 6.1 and Corollary 6.2 of \cite{Ren18}]\label{cohenrenlemma}
For any positive integer $t$ and $n\geq 2$, there is a homotopy equivalence
\[
\hspace{3.13cm}
\Sigma^{nto(\xi_{X, n})} D_n(X, Z)\stackrel{}{\longrightarrow} D_n(X, \Sigma^{to(\xi_{X, n})} Z).
 \hspace{3.13cm}\Box
\]
\end{lemma}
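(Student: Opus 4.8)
The plan is to build the required homotopy equivalence by exhibiting an explicit map of labelled‐configuration data and then untwisting the suspension coordinates using the definition of the order $o(\xi_{X,n})$. Write $o=o(\xi_{X,n})$ for brevity. The starting observation is that a $to$-fold suspension of a complex $Z$ can be modelled fibrewise: $\Sigma^{to}Z$ is the Thom space of the trivial rank-$to$ bundle over $Z$, and more usefully $(\Sigma^{to}Z)^{\wedge n}$ carries a $\Sigma_n$-action that, as a $\Sigma_n$-equivariant space over a point, is the Thom space of the $\Sigma_n$-representation $(\mathbb{R}^{to})^{\otimes\text{(permutation on $n$ factors)}}$ smashed with $Z^{\wedge n}$. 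Concretely, $(\Sigma^{to}Z)^{\wedge n}\cong S^{to\cdot\mathbb{R}^n}\wedge Z^{\wedge n}$ as $\Sigma_n$-spaces, where $\mathbb{R}^n$ is the standard permutation representation and $S^{V}$ denotes the one-point compactification of a representation $V$. Hence
\[
D_n(X,\Sigma^{to}Z)=\Conf(X,n)^{+}\wedge_{\Sigma_n}\big(S^{to\mathbb{R}^n}\wedge Z^{\wedge n}\big)\cong \mathrm{Th}\big(\Conf(X,n)\times_{\Sigma_n} to\mathbb{R}^n\big)\wedge_{?}\cdots,
\]
more precisely it is the Thom space of the bundle $to\,\xi_{X,n}$ pulled back appropriately, smashed fibrewise with the $Z^{\wedge n}$ data; I would phrase this as: $D_n(X,\Sigma^{to}Z)$ is the fibrewise smash over $\Conf(X,n)/\Sigma_n$ of the Thom space of $\xi_{X,n}^{\oplus to}$ with $\Conf(X,n)^{+}\wedge_{\Sigma_n}Z^{\wedge n}$.

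Next I would invoke the definition of $o$: the bundle $\xi_{X,n}^{\oplus o}$ is trivial, so $\xi_{X,n}^{\oplus to}\cong\varepsilon^{nto}$ is the trivial bundle of rank $nto$ (note $\dim\xi_{X,n}=n$, so $o$ copies give rank $no$, and $t$ of those give rank $nto$). A choice of trivialisation induces a homotopy equivalence of Thom spaces $\mathrm{Th}(\xi_{X,n}^{\oplus to})\simeq \mathrm{Th}(\varepsilon^{nto})=(\text{base})^{+}\wedge S^{nto}$. Transporting this fibrewise-smashed with the $Z^{\wedge n}$ factor turns the fibrewise construction $D_n(X,\Sigma^{to}Z)$ into $S^{nto}\wedge\big(\Conf(X,n)^{+}\wedge_{\Sigma_n}Z^{\wedge n}\big)=\Sigma^{nto}D_n(X,Z)$. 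Assembling these two identifications gives the desired equivalence
\[
\Sigma^{nto}D_n(X,Z)\;\xrightarrow{\ \simeq\ }\;D_n(X,\Sigma^{to}Z).
\]
This is essentially Ren's argument (Lemma 6.1, Corollary 6.2 of \cite{Ren18}), reproving Cohen's original statement, so I would cite it and only sketch the representation-theoretic bookkeeping above.

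The step I expect to be the main obstacle — and the one deserving the most care — is making the $\Sigma_n$-equivariance rigorous throughout. The trivialisation of $\xi_{X,n}^{\oplus o}$ is a statement about the \emph{Borel construction} $\Conf(X,n)\times_{\Sigma_n}\mathbb{R}^n$, not about an equivariant trivialisation of $\mathbb{R}^n$ over $\Conf(X,n)$ (the latter is generally false). So one must be careful that the identification $(\Sigma^{to}Z)^{\wedge n}\cong S^{to\mathbb{R}^n}\wedge Z^{\wedge n}$ is genuinely $\Sigma_n$-equivariant, and that after passing to the Borel construction (quotient by $\Sigma_n$) the suspension coordinate $S^{to\mathbb{R}^n}$ becomes precisely the Thom space of $\xi_{X,n}^{\oplus to}$, to which the triviality of that bundle then applies. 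A second, smaller point is the interaction with basepoints: the reduced smash $\wedge_{\Sigma_n}$ and the one-point-compactification conventions must be lined up so that no spurious collapsing occurs. Once the equivariant identifications are set up correctly, the remainder is the formal manipulation of Thom spaces and suspensions indicated above.
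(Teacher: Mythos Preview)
The paper does not give its own proof of this lemma: it is stated with a terminal $\Box$ and attributed entirely to Lemma~6.1 and Corollary~6.2 of \cite{Ren18}, so there is nothing in the paper to compare against beyond the citation itself.

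Your sketch is the standard argument and is essentially what underlies Ren's proof: identify $(\Sigma^{to}Z)^{\wedge n}$ $\Sigma_n$-equivariantly with $S^{to\,\mathbb{R}^n}\wedge Z^{\wedge n}$, pass to the Borel construction to recognise $D_n(X,\Sigma^{to}Z)$ as a Thom-space construction for $\xi_{X,n}^{\oplus to}$ over the $D_n(X,Z)$ data, and then use the trivialisation of $\xi_{X,n}^{\oplus to}$ (guaranteed by the definition of $o=o(\xi_{X,n})$) to pull the $S^{nto}$ factor out as an honest suspension. The two caveats you flag---that triviality is a statement about the Borel quotient rather than an equivariant trivialisation upstairs, and that the basepoint/smash conventions must be aligned---are exactly the points that need care, and once handled the rest is formal. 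Since the paper is content to cite \cite{Ren18}, your write-up already exceeds what the paper provides; for the purposes of this paper a one-line citation suffices.
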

By Lemma \ref{cohenrenlemma} and Theorem \ref{main0727}, we immediately obtain the following proposition, which indicates that the stable homotopy types of $D_n(X, \Sigma^t Z)$ exhibit a natural periodic behavior as $t$ varies.

\begin{proposition}
Let $\Gamma$ be a finite graph such that $\Gamma\not\in \mathcal{L}$. Then for any complex $Z$,  positive integer $t$ and $n\geq 2$
\begin{itemize}
\item[(1).] if $\Gamma$ is homeomorphic to a circle with $n$ odd, then 
\[
\Sigma^{nt} D_n(\Gamma, Z)\stackrel{}{\longrightarrow} D_n(\Gamma, \Sigma^{t} Z);
\] 
\item[(2).] if $\Gamma$ is homeomorphic to a circle with $n$ even; or if $\Gamma$ is planar such that $\Gamma \not\cong S^1$ 
\[
\Sigma^{2nt} D_n(\Gamma, Z)\stackrel{}{\longrightarrow} D_n(\Gamma, \Sigma^{2t} Z);
\]
\item[(3).] if $\Gamma$ is nonplanar
\[
\Sigma^{4nt} D_n(\Gamma, Z)\stackrel{}{\longrightarrow} D_n(\Gamma, \Sigma^{4t} Z).
\]
\end{itemize}
\end{proposition}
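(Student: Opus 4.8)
The statement is a formal consequence of Lemma~\ref{cohenrenlemma}, applied with $X=\Gamma$, once the values of $o(\xi_{\Gamma,n})$ coming from Theorem~\ref{main0727} and the propositions of Sections~\ref{sec: planar}--\ref{sec: nonplanar} are fed in. So the plan is: first read off $o(\xi_{\Gamma,n})$ in each of the three listed situations, and then substitute into the homotopy equivalence $\Sigma^{nt\,o(\xi_{\Gamma,n})}D_n(\Gamma,Z)\simeq D_n(\Gamma,\Sigma^{t\,o(\xi_{\Gamma,n})}Z)$ provided by Lemma~\ref{cohenrenlemma}.

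For the first step, note that under the standing hypothesis $\Gamma\notin\mathcal{L}$: Proposition~\ref{s1orderpro} gives $o(\xi_{\Gamma,n})=1$ when $\Gamma\cong S^1$ and $n$ is odd, and $o(\xi_{\Gamma,n})=2$ when $\Gamma\cong S^1$ and $n$ is even; Proposition~\ref{planarorderpro} gives $o(\xi_{\Gamma,n})=2$ for every planar $\Gamma\not\cong S^1$ and every $n\geq 2$; and Propositions~\ref{Kurnorderprop} and~\ref{nonplanarorderprop} together give $o(\xi_{\Gamma,n})=4$ for every nonplanar $\Gamma$ and every $n\geq 2$. These are precisely the three cases in the proposition.

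For the second step, I would substitute these values one at a time into Lemma~\ref{cohenrenlemma}: putting $o(\xi_{\Gamma,n})=1$ yields $\Sigma^{nt}D_n(\Gamma,Z)\simeq D_n(\Gamma,\Sigma^{t}Z)$, which is case (1); putting $o(\xi_{\Gamma,n})=2$ yields $\Sigma^{2nt}D_n(\Gamma,Z)\simeq D_n(\Gamma,\Sigma^{2t}Z)$, which is case (2); and putting $o(\xi_{\Gamma,n})=4$ yields $\Sigma^{4nt}D_n(\Gamma,Z)\simeq D_n(\Gamma,\Sigma^{4t}Z)$, which is case (3). This uses that $t$ ranges over all positive integers in Lemma~\ref{cohenrenlemma}, so that the substitution $t\mapsto o(\xi_{\Gamma,n})\,t$ is legitimate. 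There is no genuine obstacle here: all of the real content sits in Lemma~\ref{cohenrenlemma} and Theorem~\ref{main0727}. The only point that deserves a moment of care is that in the nonplanar case one must quote the \emph{all-}$n$ assertions of Propositions~\ref{Kurnorderprop} and~\ref{nonplanarorderprop}, since Theorem~\ref{main0727}(3) as stated covers only $n=2$; with those in hand, each of the three cases follows immediately.
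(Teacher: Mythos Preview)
Your proposal is correct and matches the paper's own argument, which simply says the proposition follows immediately from Lemma~\ref{cohenrenlemma} together with Theorem~\ref{main0727}. Your extra care in noting that the nonplanar case for general $n$ requires Propositions~\ref{Kurnorderprop} and~\ref{nonplanarorderprop} (rather than Theorem~\ref{main0727}(3), which is stated only for $n=2$) is a valid observation; the remark about substituting $t\mapsto o(\xi_{\Gamma,n})\,t$ is unnecessary, since the factor $o(\xi_{X,n})$ is already built into the statement of Lemma~\ref{cohenrenlemma}.
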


\end{document}